\theoremstyle{plain}
\newtheorem{prop}{Proposition}[section]
\newtheorem{thm}[prop]{Theorem}
\theoremstyle{definition}
\newtheorem{remark}[prop]{Remark}
\newcommand{\trank}{\ensuremath{\mathrm{d}}}
\newcommand{\Z}{\ensuremath{\mathbb{Z}}}
\newcommand{\F}{\ensuremath{\mathbf{F}}}
\DeclareMathOperator{\im}{im}
\DeclareMathOperator{\GL}{\mathbf{GL}}
\DeclareMathOperator{\ch}{char}
\newcommand{\n}{\noindent}
\newcommand{\map}[3]{\ensuremath{#1 : #2 \longrightarrow #3}}
\newcommand{\dih}{\ensuremath{\mathbf{D}}}
\newcommand{\cyc}{\ensuremath{\mathbf{C}}}
\newcommand{\upt}{\ensuremath{\mathbf{U}}}
\renewcommand{\F}{\ensuremath{\mathbf{F}}}
\renewcommand{\Z}{\ensuremath{\mathbf{Z}}}
\newcommand{\gk}[1]{\ensuremath{\Z_{#1}\rtimes \Z[\cyc_2]}}
\begin{document}  
 
\title{Fuchs' problem for dihedral groups}
\date{\today}
 
\author{Sunil K. Chebolu}
\address{Department of Mathematics \\
Illinois State University \\
Normal, IL 61790, USA}
\email{schebol@ilstu.edu}

\author{Keir Lockridge} 
\address {Department of Mathematics \\
Gettysburg College \\
Gettysburg, PA 17325, USA}
\email{klockrid@gettysburg.edu}


\keywords{group of units, group algebra, dihedral group, Jacobson radical, split quaternions}
\subjclass[2000]{Primary 11T06, 20K10; Secondary 06F20}
 
\begin{abstract}
More than 50 years ago, L\'{a}szl\'{o} Fuchs asked which abelian
groups can be the group of units of a ring.
Though progress has been made, the question remains open. One could equally well pose the question for various classes of nonabelian groups.
In this paper, we prove that $\dih_2, \dih_4, \dih_6, \dih_8,$ and $\dih_{12}$ are the only dihedral groups that appear as the group of units of a ring of positive characteristic (or, equivalently, of a finite ring), and $\dih_2$ and $\dih_{4k}$, where $k$ is odd, are the only dihedral groups that appear as the group of units of a ring of characteristic 0.
\end{abstract}
 
\maketitle
\thispagestyle{empty}

\tableofcontents


\section{Introduction}\label{sec:introduction}

L\'{a}szl\'{o} Fuchs poses the following problem in \cite{fuchsprob}: determine which abelian groups are the group of units of a commutative ring. In \cite{PS}, following \cite{gilmer}, all finite cyclic groups which occur as the group of units of a ring are determined. In \cite{fuchs}, we provide an answer to this question for indecomposable abelian groups. Rather than narrowing the class of abelian groups under consideration, one could also broaden the scope of the problem by considering nonabelian groups and noncommutative rings. Call a group $G$ {\bf realizable} if it is the group of units of some ring. In \cite{do1} and \cite{do2} the authors determine which alternating, symmetric, and finite simple groups are realizable.
In the present work, we determine which dihedral groups are the group of units of a ring, and our classification is stratified by characteristic. All rings in this paper are rings with unity. Our results are summarized in the following theorem.

\begin{thm} Let $R$ be a ring. If the group of units $R^\times$ is a dihedral group, then $\mathrm{char}\, R = 0, 2, 3, 4, 6, 8$, or $12.$ The following is a complete list of the dihedral groups which are realizable as the group of units for a ring $R$ with $c = \ch R$. In the third column, we give an example of such a ring in each case.

\begin{center}
\begin{tabular}{| l | l | l |}
\hline

$c = 0$ & $\dih_2$ & $\Z$ \\
& $\dih_{4k}$, $k$ odd & $\gk{k}$ \\ \hline

$c = 2$ & $\dih_2$ &  $\F_2[\dih_2]$ \\
& $\dih_4$ &  $\F_2[\dih_2] \times \F_2[\dih_2]$ \\
& $\dih_6$ & $\mathbf{M}_{2}(\F_2)$ \\
& $\dih_8$ & $\upt_3(\F_2)$ \\ 
& $\dih_{12}$ & $\F_2[\dih_6]$ \\ \hline

$c = 3$ & $\dih_2$ & $\F_3$ \\
& $\dih_4$ & $\F_3 \times \F_3$ \\
& $\dih_{12}$ & $\upt_2(\F_3)$ \\ \hline

$c = 4$ & $\dih_2$ & $\Z_4$ \\
& $\dih_4$ & $\Z_4 \times \Z_4$ \\
& $\dih_8$ & $\mathrm{End}_{\Z}(\cyc_4 \times \cyc_2)$ \\ 
& $\dih_{12}$ & $\Z_4 \times \mathbf{M}_{2}(\F_2)$ \\ \hline

$c = 6$ & $\dih_2$ & $\F_2 \times \F_3$ \\
& $\dih_4$ &  $\F_2 \times \F_3 \times \F_3$ \\
& $\dih_{12} $ & $\F_2 \times \upt_{2}(\F_3)$ \\ \hline

$c = 8$ & $\dih_4$ & $\Z_8$ \\ \hline

$c = 12$ & $\dih_4$ & $\Z_{12}$ \\ \hline
\end{tabular}
\end{center}
\label{main}
\end{thm}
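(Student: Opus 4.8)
The plan is to reduce the problem to a finite computation by exploiting the rigidity of the dihedral group structure, especially the behavior of torsion. The starting point is the standard decomposition $R = \Z/c \oplus J$ coming from the characteristic, together with the fact that for a ring $R$ with $R^\times$ finite, $R$ itself is finite modulo its additive torsion; more precisely, if $R^\times \cong \dih_{2n}$ then in characteristic $0$ the Pierce-type decomposition forces $R$ to split off a torsion-free part whose unit group must embed in $\dih_{2n}$, and the only torsion-free rings with dihedral units are those whose unit group is $\{\pm 1\} = \dih_2$. So the characteristic-$0$ case should follow from the positive-characteristic case by: (i) splitting $R$ along a central idempotent into a factor of characteristic $0$ and a factor of positive characteristic, (ii) showing the characteristic-$0$ factor contributes only a $\Z$-like piece with unit group $\cyc_2$, and (iii) assembling the dihedral group as a product $\dih_2 \times (\text{positive char part})$, using that $\dih_2 \times \dih_{2k} \cong \dih_{4k}$ precisely when $k$ is odd. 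I would prove this product identity as a small lemma and use it in both directions.

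For the positive-characteristic (equivalently, finite-ring) case, first I would bound $|R|$ in terms of $n = |R^\times \cap \{\text{rotations}\}|$. Writing $R = \F \oplus J$ with $\F$ a finite product of finite fields (the semisimple quotient lifted via idempotents when possible, or more carefully $R/J(R)$ semisimple) and $J = J(R)$ the Jacobson radical, one has $R^\times \cong (1+J) \rtimes (R/J)^\times$ when the extension splits, and in general there is an exact sequence $1 \to 1+J \to R^\times \to (R/J)^\times \to 1$. Since $1+J$ is a $p$-group (for $p = \ch R$ prime, or handled prime-by-prime for composite characteristic via CRT), and $(R/J)^\times$ is a product of cyclic groups $\F_{q_i}^\times$, I would analyze which extensions of a product of cyclic groups by a $p$-group can be dihedral. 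The dihedral group $\dih_{2n}$ has a very restricted subgroup lattice: its unique cyclic subgroup of index $2$ is $\cyc_n$, every other subgroup is dihedral or cyclic of order dividing $n$, and its abelianization is $\cyc_2$ or $\cyc_2 \times \cyc_2$. This forces $(R/J)^\times$ to be either $\cyc_2$, $\cyc_2 \times \cyc_2$, or cyclic, which via the classification of finite fields severely limits the $q_i$: each $q_i - 1$ must be a power of $2$ (Fermat-prime-power situation) or we are in small characteristic. Then $1+J$, being a normal $p$-subgroup of a dihedral group, must be cyclic (if $p$ odd, it lies in $\cyc_n$) or is an elementary-abelian-ish $2$-group sitting inside $\dih_{2n}$ — and normal subgroups of $\dih_{2n}$ that are $2$-groups are quite constrained, essentially $\cyc_{2^a}$ inside the rotation subgroup or small cases. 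Running through the finitely many resulting possibilities for $(|J|, R/J, \ch R)$ and checking realizability against explicit models (the matrix, upper-triangular, and group-algebra rings listed in the table) yields exactly the stated list.

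The main obstacle I anticipate is the case analysis when $\ch R = 2$ and $1+J$ is a large $2$-group: here $R^\times$ is a $2$-group only when $(R/J)^\times$ is trivial (all residue fields $\F_2$), so $R^\times = 1+J$ must itself be dihedral, and I must classify all finite $\F_2$-algebras $R$ with $J^{m}=0$ whose one-units form a dihedral group. Dihedral $2$-groups are $\dih_{2^k}$, and the group-algebra identity $\F_2[\dih_{2^k}]$ has $(1+J)$ far larger than $\dih_{2^k}$ for $k \geq 3$, so one needs a genuine argument — likely via the associated graded ring $\bigoplus J^i/J^{i+1}$ and a dimension/relations count, or via the observation that $1+J$ abelian-by-small forces $\dim_{\F_2} R$ to be small — to rule out everything beyond $\dih_8 = \upt_3(\F_2)^\times$. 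Pinning down that $\upt_3(\F_2)$ (or an isomorphic $8$-element-unit algebra) is the unique such ring up to the relevant equivalence, and symmetrically handling $\ch R = 3$ with $\upt_2(\F_3)$, is where the real work lies; the mixed characteristics $4, 6, 8, 12$ then fall out by combining the prime-characteristic building blocks via CRT and the product lemma, with the finitely many combinations that survive the dihedral-subgroup constraints matching the table entries.
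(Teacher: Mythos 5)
There is a genuine gap, and it is fatal in the characteristic-zero case. You propose to split a characteristic-zero ring $R$ with $R^\times \cong \dih_{2n}$ along a central idempotent into a torsion-free factor with unit group $\{\pm 1\}$ and a positive-characteristic factor, thereby reducing to the finite case via $\dih_{4k} \cong \cyc_2 \times \dih_{2k}$. No such splitting exists in general: the ring $\gk{k} = \Z_k \oplus \Z[\cyc_2]$ (with the twisted multiplication $(t,u)(t',v) = (tD(v)+t'S(u), uv)$) has characteristic $0$, unit group $\dih_{4k}$ for $k$ odd, and its only central idempotents are $0$ and $1$, so it is indecomposable as a ring. Worse, your reduction would give the wrong answer: it predicts that $\dih_{4k}$ is realizable in characteristic $0$ only when $\dih_{2k}$ is realizable in positive characteristic, which would exclude $\dih_{20}, \dih_{28}, \dih_{44}, \ldots$, all of which \emph{are} realizable in characteristic $0$. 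You are also missing any mechanism for excluding $\dih_{8k}$ in characteristic $0$; this requires a real argument (the paper maps the split quaternion ring $Q$ over $\Z$ into $R$ using $i = r^k$, $s$, and shows that a norm-$(-1)$ unit $z = (1+4i)+(3+3i)s$ has $N(z^{8k}-1) \neq 0$ because $\mathrm{Re}\,z^j$ grows, contradicting $z^{8k}=1$ in a characteristic-zero ring). Nothing in your outline produces an obstruction of this kind.

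The positive-characteristic half is closer in spirit to the paper (Jacobson radical, $1+J$ as a normal subgroup of $\dih_{2n}$, Artin--Wedderburn for $R/J$), but it too contains a false step: $R/J$ is a product of matrix rings $\mathbf{M}_m(\F_q)$, not of fields, so $(R/J)^\times$ is a product of groups $\GL_m(\F_q)$ and need not be abelian --- indeed $\GL_2(\F_2)\cong\dih_6$ is exactly how $\dih_6$ and $\dih_{12}$ arise. Your conclusion that $(R/J)^\times$ is cyclic or $\cyc_2\times\cyc_2$, and the ensuing ``$q_i-1$ is a power of $2$'' constraint, therefore do not follow. You also leave unaddressed the two cases that require genuinely ad hoc work: excluding $\dih_{24}$ in characteristic $2$ (the paper shows any unital ring generated by its units with unit group $\dih_{12}$ is decomposable, then contradicts indecomposability of $R/J^2$) and excluding $\dih_{16}$ in characteristic $4$ (a direct computation forcing $2=0$). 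Acknowledging that ``the real work lies'' there is not a substitute for the arguments. Finally, the bound $8\nmid n$ in characteristic $2$ and $9\nmid n$ in characteristic $3$ comes from a quantitative statement about units of order $p^r$ forcing noncyclic abelian subgroups of large rank (via the unit group of $\F_p[x]/(x^n)$), which your subgroup-lattice remarks gesture at but do not establish.
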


\n The ring $\gk{k}$ is defined in \S \ref{sec:char0}. The group $\cyc_n$ denotes the (multiplicative) cyclic group of order $n$, $\F_{p^k}$ denotes the finite field of order $p^k$ (for $p$ a prime), $\Z_n$ denotes the ring of integers modulo $n$, and $k[G]$ denotes the group algebra of $G$ with coefficients in $k$. The ring $\mathbf{M}_n(k)$ denotes the ring of $n \times n$ matrices with entries in $k$, and $\upt_n(k)$ denotes the ring of $n \times n$ upper triangular matrices with entries in $k$. The group of units of $\upt_3(\F_2)$ is the Heisenberg group of order 8, which is isomorphic to $\dih_8$. The group algebras $\F_2[\dih_2]$ and $\F_2[\dih_6]$ are isomorphic to $\F_2[x]/(x^2)$  and $\F_2[x]/(x^2) \times \mathbf{M}_{2}(\F_2)$, respectively.

\begin{proof} We prove in Proposition \ref{gamma-4k} that the unit group of $\gk{k}$ is $\dih_{4k}$ when $k$ is odd. It is straightforward to check that the remaining rings have the indicated dihedral unit groups; we leave this to the reader to verify. In Proposition \ref{chars}, we show that the characteristic of a ring whose group of units is dihedral must be 0, 2, 3, 4, 6, 8, or 12.  It is clear from the proof that $\dih_4$ is the only dihedral group realizable in characteristic 8 since it is the only dihedral group whose center contains $\Z_8^\times \cong \cyc_2 \times \cyc_2$, and $\dih_4$ is the only dihedral group realizable in characteristic 12 since it is the only dihedral group whose center contains $\Z_{12}^\times \cong \cyc_2 \times \cyc_2$. We eliminate the possibility that any unlisted dihedral groups are the group of units of a ring of characteristic $c$ in Propositions \ref{char0-8k} ($c = 0$); \ref{char2main} and \ref{char2d24} ($c = 2$); \ref{char3} ($c = 3$); \ref{char4main} and \ref{char4d16} ($c = 4$); \ref{char6} ($c = 6$).
\end{proof}

There is a finite ring whose unit group is dihedral if and only if there is a ring of positive characteristic whose unit group is dihedral (see Proposition \ref{reducefinite}). Note also that every dihedral group that occurs as the group of units of a finite ring occurs as the group of units of a ring of characteristic 2, and this is precisely the list of dihedral groups which occur as subgroups of $\GL_2(\Z)$ (see \cite{fg2x2}). The nonabelian dihedral groups appearing as the group of units of a finite ring, $\dih_6, \dih_8,$ and $\dih_{12}$, arise in at least two other interesting situations: they are the only nonabelian dihedral groups isomorphic to their own automorphism groups, and they correspond precisely with the only regular polygons that tile the plane (equilateral triangles, squares, and regular hexagons).

We wish to thank the developers of the open source mathematics software system SageMath; it enabled us to do several computations that helped illuminate various aspects of the proof of Theorem \ref{main}. We are also grateful to the referee for providing detailed and helpful comments.

\section{Preliminaries} \label{sec:prelim}

We will use the following familiar presentation of the dihedral group of order $2n$, generated by a rotation $r$ of order $n$ and a fixed reflection $s$: $$\dih_{2n} = \langle r, s\, | \, r^n = 1, s^2 = 1, srs = r^{-1}\rangle.$$ The following properties of the dihedral group are well known and will be used freely throughout this paper. We will refer to a group $G$ as indecomposable if it is not isomorphic to a direct product of two nontrivial groups, and we will refer to a ring $R$ as indecomposable if it is not isomorphic to a direct product of two nontrivial rings (the trivial ring is the ring $\{0\}$). 

\begin{prop} The dihedral group $\dih_{2n}$ satisfies the following properties. \label{dih-prop}
\begin{enumerate}
\item We have $\dih_2 \cong \cyc_2$ and $\dih_4 \cong \dih_2 \times \dih_2 \cong \cyc_2 \times \cyc_2$.
\item For $n > 2$, the center of $\dih_{2n}$ is $\langle 1 \rangle$ if $n$ is odd and $\langle r^{n/2} \rangle$ if $n$ is even.
\item For $n > 2$, the dihedral group is indecomposable unless $n = 2k$ where $k$ is odd, in which case $$\dih_{2n} \cong \cyc_2 \times \dih_{2k},$$ where both factors are indecomposable. This direct product decomposition is unique up to the order of the factors. \label{dih-prop-decomp}
\item For $n > 2$ and $n$ odd, the only proper normal subgroups of $\dih_{2n}$ are the subgroups of $\langle r \rangle$. For $n > 2$ and $n$ even, there are two additional proper normal subgroups,  $\langle r^2, s\rangle$ and $\langle r^2, rs\rangle$, both of order $n$ and isomorphic to $\dih_n$.
\end{enumerate}
\label{dihedralprop}
\end{prop}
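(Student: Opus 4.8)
The plan is to verify all four parts directly from the presentation $\dih_{2n} = \langle r, s \mid r^n = s^2 = 1,\ srs = r^{-1}\rangle$, and in particular I would not prove them strictly in the stated order, since part~(4) is logically prior to part~(3). Parts~(1) and~(2) are short computations. For~(1), setting $n=1$ collapses $r$ and leaves $\langle s\mid s^2=1\rangle\cong\cyc_2$, while $n=2$ turns the relation $srs=r^{-1}$ into $sr=rs$, so $\dih_4$ is generated by two commuting involutions and equals $\cyc_2\times\cyc_2\cong\dih_2\times\dih_2$. For~(2), write an arbitrary element as $r^i$ or $r^is$: the rotation $r^i$ is central iff $sr^is=r^i$, i.e. $r^{2i}=1$, i.e. $n\mid 2i$, which for $0\le i<n$ forces $i=0$ when $n$ is odd and $i\in\{0,n/2\}$ when $n$ is even; a reflection $r^is$ is central iff it commutes with $r$, but $r(r^is)=r^{i+1}s$ while $(r^is)r=r^{i-1}s$, forcing $r^2=1$, impossible for $n>2$. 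Hence $Z(\dih_{2n})$ is as claimed, and has order $2$ in the even case.

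For part~(4) I would start from the standard classification of subgroups of $\dih_{2n}$: every subgroup is either a rotation subgroup $\langle r^d\rangle$ with $d\mid n$, or of the form $\langle r^d, r^is\rangle\cong\dih_{2n/d}$ with $d\mid n$ and $0\le i<d$ (prove this by intersecting the subgroup with $\langle r\rangle$ and, if the subgroup is not contained in $\langle r\rangle$, choosing a reflection in it). Normality is then checked on the generators $r,s$. Every $\langle r^d\rangle$ is normalized by both $r$ and $s$ (since $sr^js=r^{-j}$), so all rotation subgroups are normal. For $\langle r^d, r^is\rangle$, conjugating $r^is$ by $r$ gives $r^{i+2}s$, which lies in the subgroup iff $d\mid 2$; the case $d=1$ is the whole group, and $d=2$ requires $n$ even and yields exactly $\langle r^2,s\rangle$ and $\langle r^2,rs\rangle$, each of order $n$ and dihedral of order $n$ (one also checks that $s$ normalizes these). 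This is precisely the asserted list.

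Part~(3) I would deduce from~(4). Suppose $\dih_{2n}=A\times B$ internally with $A,B$ nontrivial; then $A$ and $B$ are proper normal subgroups with $A\cap B=1$ and $AB=\dih_{2n}$. They cannot both lie in $\langle r\rangle$, since then $AB\le\langle r\rangle\ne\dih_{2n}$. When $n$ is odd this already contradicts~(4), so $\dih_{2n}$ is indecomposable. When $n$ is even, one factor, say $A$, must be $\langle r^2,s\rangle$ or $\langle r^2,rs\rangle$, of order $n$, forcing $|B|=2$; a normal subgroup of order $2$ is central, hence $B=\langle r^{n/2}\rangle$. Now $r^{n/2}\in\langle r^2\rangle$ exactly when $4\mid n$: if $4\mid n$ then $B\le A$, contradicting $A\cap B=1$, so $\dih_{2n}$ is indecomposable; if $n=2k$ with $k$ odd then $A\cap B=1$ genuinely holds, $AB$ has order $2n$, and $\dih_{2n}=A\times B\cong\dih_{2k}\times\cyc_2$. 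Since $n>2$ and $n=2k$ with $k$ odd forces $k\ge 3$, the factor $\dih_{2k}$ is a nonabelian dihedral group and is indecomposable by the $n$-odd case just proved, while $\cyc_2$ is obviously indecomposable; moreover the analysis above shows every nontrivial decomposition has one factor of order $2$ and one dihedral factor of order $n$, so the isomorphism types of the two factors are determined (one may alternatively invoke the Krull--Schmidt theorem for finite groups).

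The only place needing genuine care is part~(3): making the direct-product analysis airtight, and in particular correctly isolating the $4\mid n$ versus $n\equiv 2\pmod 4$ dichotomy and checking that $A\cap B=1$ fails precisely when $4\mid n$. Everything else — parts~(1), (2), and the normality computations underlying~(4) — is routine manipulation of the defining relations.
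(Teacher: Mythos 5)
Your proof is correct. The paper offers no argument for this proposition at all --- it is stated as a collection of well-known facts to be used freely --- so there is no proof to compare against; your elementary verification from the presentation is sound and complete. In particular, the one delicate point, deducing part~(3) from the normal-subgroup classification in part~(4) and correctly locating the obstruction $r^{n/2}\in\langle r^2\rangle$ exactly when $4\mid n$, is handled properly, and the uniqueness claim is covered either by your direct analysis (the order-$n$ factor is forced to be one of the two $\dih_n$'s and the order-$2$ factor is forced to be the center) or by the Krull--Schmidt theorem as you note.
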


We will say that a group $G$ is realizable in characteristic $m$ if $G$ is the group of units of some ring of characteristic $m$. The next proposition shows that a realizable group is the group of units of a quotient of a group algebra.

\begin{prop}
If $\dih_{2n}$ is realizable in characteristic $m$, then it is the group of units of the ring $\Z_m[\dih_{2n}]/I$ for some two-sided ideal $I$. In particular, $\dih_{2n}$ is the group of units of a finite ring if and only if it is realizable in positive characteristic.
\label{reducefinite}
\end{prop}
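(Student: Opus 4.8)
The plan is to combine the universal property of the group algebra with the elementary observation that replacing a ring by the subring generated by its units leaves the unit group unchanged. Suppose $R$ is a ring of characteristic $m$ together with a fixed isomorphism $\dih_{2n} \cong R^\times$, which I regard as an embedding $\dih_{2n} \hookrightarrow R^\times$. Since $\ch R = m$, the unique ring homomorphism $\Z \to R$ factors through $\Z_m$, so $R$ is a $\Z_m$-algebra. The composite $\dih_{2n} \hookrightarrow R^\times$ is then a group homomorphism into the units of a $\Z_m$-algebra, so by the universal property of the group algebra it extends uniquely to a $\Z_m$-algebra homomorphism $\varphi \colon \Z_m[\dih_{2n}] \to R$. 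Put $I = \ker \varphi$, a two-sided ideal, and $S = \varphi(\Z_m[\dih_{2n}]) \cong \Z_m[\dih_{2n}]/I$, a subring of $R$ containing $1$.

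Next I would check that $S^\times = R^\times$. By construction every element of $\dih_{2n} \subseteq R^\times$ lies in $S$; and since $R^\times$ is a group, the inverse of each such element also lies in $R^\times \subseteq S$, so in fact $\dih_{2n} \subseteq S^\times$. Conversely, any element of $S$ with a two-sided inverse in $S$ has that inverse in $R$ as well, so $S^\times \subseteq R^\times$. Since $R^\times = \dih_{2n}$, these inclusions collapse to $\dih_{2n} \subseteq S^\times \subseteq R^\times = \dih_{2n}$, hence $S^\times = R^\times \cong \dih_{2n}$. Thus $\dih_{2n}$ is the group of units of $\Z_m[\dih_{2n}]/I$, proving the first assertion. (One may also note, if desired, that $S$ has characteristic exactly $m$, since $1$ has the same additive order in $S$ as in $R$.)

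For the ``in particular'' clause: if $m > 0$, then $\Z_m[\dih_{2n}]$ is a free $\Z_m$-module of rank $2n$, hence a finite ring (of order $m^{2n}$), so its quotient $\Z_m[\dih_{2n}]/I$ is finite; therefore any dihedral group realizable in positive characteristic is the group of units of a finite ring. Conversely, a finite ring has finite additive group, so $1$ has finite additive order and the ring has positive characteristic; hence a dihedral group that is the group of units of a finite ring is realizable in positive characteristic.

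I do not expect a genuine obstacle here. The only step requiring any care is the identity $S^\times = R^\times$: one must use that $R^\times$ is closed under inversion to conclude that the inverses of the generating group elements already lie in $S$ — not merely in $R$ — which is precisely what forces the unit group not to grow when we pass to the subring generated by the units.
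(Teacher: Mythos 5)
Your proposal is correct and follows essentially the same route as the paper: map the group algebra $\Z_m[\dih_{2n}]$ into $R$ via the universal property, take $I$ to be the kernel, and observe that the image has the same unit group. The paper states the key fact $(R')^\times = \dih_{2n}$ without elaboration, whereas you spell out the (correct) verification that the unit group neither shrinks nor grows when passing to the subring generated by the units; the ``in particular'' clause is handled identically.
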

\begin{proof}
Let $R$ be a ring of characteristic $m$ whose group of units is isomorphic to $\dih_{2n}$.  There is a ring homomorphism $$\map{\varphi}{\Z_m[\dih_{2n}]}{R}$$ that restricts to the identity map on $\dih_{2n} \leq (\Z_m[\dih_{2n}])^\times$ (note that if $m = 0$, then $\Z_m = \Z$). The image of $\varphi$ is isomorphic to $R' = \Z_m[\dih_{2n}]/I$, where $I$ is a two-sided ideal equal to the kernel of $\varphi$, and $(R')^\times = \dih_{2n}$.
\end{proof}

In this and subsequent sections, we will replace $R$ with the quotient $\Z_m[\dih_{2n}]/I$ appearing in the above proposition as necessary. Usually, the significance of this replacement is simply that $R$ may be assumed to be generated by its units and finite when $m > 0$. We next determine the possible characteristics of a ring with dihedral units.

\begin{prop} If $R$ is a ring whose group of units is isomorphic to a dihedral group $\dih_{2n}$, then the characteristic of $R$ is $0, 2, 3, 4, 6, 8$ or $12$. If $n > 1$ is odd, then the characteristic of $R$ must be $2$.
\label{chars}
\end{prop}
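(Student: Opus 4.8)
The plan is to exploit the fact that the center of $R$ contains the image of $\Z_m \to R$, so $\Z_m^\times$ embeds in the center $Z(R^\times) = Z(\dih_{2n})$. By Proposition \ref{dihedralprop}(2), $Z(\dih_{2n})$ is trivial when $n$ is odd and has order $2$ when $n > 2$ is even; the small cases $n = 1, 2$ give $\dih_2 \cong \cyc_2$ and $\dih_4 \cong \cyc_2 \times \cyc_2$. So in all cases $Z(\dih_{2n})$ is an elementary abelian $2$-group of rank at most $2$, hence $\Z_m^\times$ must be an elementary abelian $2$-group of order $1$, $2$, or $4$. An elementary computation with $\Z_m^\times \cong \prod \Z_{p_i^{a_i}}^\times$ via the Chinese Remainder Theorem shows that $(\Z/m)^\times$ is elementary abelian $2$-group exactly when $m$ divides $24$; combined with the rank bound (rank $\le 2$) this pins $m \in \{0, 2, 3, 4, 6, 8, 12, 24\}$, and $m = 24$ is excluded because $\Z_{24}^\times \cong \cyc_2^3$ has rank $3$, too big to sit in any dihedral center. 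Finally, when $n > 1$ is odd the center is trivial, so $\Z_m^\times = 1$, forcing $m \in \{0, 2, 3\}$; ruling out $m = 0$ and $m = 3$ requires a bit more.

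For the "$n > 1$ odd $\Rightarrow$ char $2$" half, I would argue as follows. If $m = 3$, then $1 + 1 = -1$ is a unit equal to its own inverse but not equal to $1$, so it is the unique central element of order $2$ in $R^\times$; but $\dih_{2n}$ with $n$ odd has no central element of order $2$, a contradiction. If $m = 0$, then $-1 \in R^\times$ is again a nontrivial central element of order $2$ (it cannot equal $1$ since char $R \ne 2$), the same contradiction. Hence $m = 2$. For $m = 0$ in the even case, note that $\Z = \Z_0$ already gives $\Z_0^\times = \{\pm 1\} \cong \cyc_2$, consistent with rank $\le 2$, so characteristic $0$ genuinely survives and no exclusion is needed there.

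The one genuinely fiddly point — the main obstacle, such as it is — is the number-theoretic step identifying precisely which $m$ have $(\Z/m)^\times$ an elementary abelian $2$-group of rank $\le 2$. The cleanest route is: if $p$ is an odd prime dividing $m$ then $\cyc_{p-1}$ embeds in $(\Z/m)^\times$, so $p - 1 \mid 2$, forcing $p = 3$; and $3^2 \nmid m$ since $(\Z/9)^\times \cong \cyc_6$ is not an elementary $2$-group. For the $2$-part, $(\Z/2^a)^\times$ is elementary abelian only for $a \le 3$, with ranks $0, 1, 2$ for $a = 1, 2, 3$. So $m \mid 24$. Then $\mathrm{rank}\,(\Z/m)^\times \le 2$ rules out $m = 24$ (rank $3$) and $m = 12$ is borderline (rank $2$, allowed). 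One must also separately confirm that the rank-$2$ cases $m = 8, 12$ only permit $\dih_4$ as the unit group, since $\cyc_2 \times \cyc_2$ must be \emph{all} of $Z(\dih_{2n})$, i.e. $Z(\dih_{2n})$ has order $4$, which by Proposition \ref{dihedralprop} happens only for $n \le 2$; this last observation is precisely what the proof of Theorem \ref{main} invokes afterward, so it need not be folded into this proposition's proof.
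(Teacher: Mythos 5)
Your proof is correct and follows essentially the same route as the paper's: embed $\Z_m^\times$ (equivalently, the subgroup generated by $-1$) into the center of $\dih_{2n}$, use the structure of that center to bound the characteristic, and use the central unit $-1$ to force characteristic $2$ when $n>1$ is odd. The only quibble is the intermediate claim that $\Z_m^\times = 1$ ``forces $m \in \{0,2,3\}$'' --- triviality of $\Z_m^\times$ would already exclude $0$ and $3$ --- but your subsequent $-1$ argument handles those cases correctly anyway, so this is a redundancy rather than a gap.
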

\begin{proof}
Suppose $R$ is a ring of characteristic $m > 0$ with $R^\times = \dih_{2n}$. Since $\Z_m$ is a central subring of $R$, $\Z_m^\times$ is a subgroup of the center of $\dih_{2n}$. This center is trivial if $n > 2$ is odd, isomorphic to $\cyc_2$ if $n = 1$ or $n > 2$ is even, and isomorphic to $\cyc_2 \times \cyc_2$ if $n = 2$.  If $m > 12$, we have $|\Z_m^\times| > 4$, so $\mathrm{char}\, R \leq 12$. If the center has order at most 2, then $n = 2, 3, 4,$ or $6$. For $\dih_4 = \cyc_2 \times \cyc_2$, the ring may also have characteristic 8 (indeed, $\Z_8^\times \cong \dih_4$) or 12 (since $\Z_{12}^\times \cong \dih_4$). For $n > 1$ note that $-1 \in R$ is a central element of multiplicative order 2, but if $n$ is odd there is no such element of $\dih_{2n}$, and hence for $n$ odd the characteristic must equal 2.
\end{proof}

To motivate the next two propositions, consider a ring $R$ of prime characteristic $p$ whose group of units contains an element of order $p^r$. We would like to know whether this imposes any interesting restrictions on the size of $R^\times$. There is a ring homomorphism
\begin{equation}
\map{\varphi}{\F_p[x]/(x^{p^r} - 1)}{R}
\label{phimap}
\end{equation}
sending $x$ to a unit of order $p^r$. The domain of $\varphi$ is isomorphic to $\F_p[x]/(x^{p^r})$, so $S = \im \varphi$ is a subring of $R$ isomorphic to $\F_p[x]/(x^n)$ for some $n \leq p^r$. In the next proposition, we compute the group of units of $\F_p[x]/(x^n)$.

\begin{prop} The group of units of $\mathbf{F}_p[x]/(x^n)$ is isomorphic to $$U_n = \cyc_{p-1} \oplus \left(\bigoplus_{1 \leq k < 1 + \log_p n} \cyc_{p^k}^{\left\lceil \frac{n}{p^{k-1}} \right\rceil - 2\left\lceil \frac{n}{p^k} \right\rceil + \left\lceil \frac{n}{p^{k+1}} \right\rceil}\right).$$ 
\label{unitprop}
\end{prop}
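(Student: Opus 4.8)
The plan is to exploit the fact that $R = \mathbf{F}_p[x]/(x^n)$ is a local ring with maximal ideal $\mathfrak{m} = (x)$, and to peel off the tame part of the unit group before analysing the wild part by means of the $\mathfrak{m}$-adic filtration. Reduction modulo $\mathfrak{m}$ gives a surjection $R^\times \to (R/\mathfrak{m})^\times = \mathbf{F}_p^\times \cong \cyc_{p-1}$ with kernel the group $A := 1+\mathfrak{m}$ of order $p^{n-1}$. Since the nonzero constants form a subgroup $\mathbf{F}_p^\times \leq R^\times$ meeting $A$ trivially and of order $(p-1)$ complementary to $|A|$ in $|R^\times|$ (equivalently, by Schur--Zassenhaus, as $\gcd(p-1,p)=1$), we get $R^\times \cong \cyc_{p-1} \oplus A$. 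Thus everything reduces to describing the abelian $p$-group $A$.

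The key computational input is the behaviour of $p$-th powers, which I would read off from the Frobenius endomorphism $\phi\colon R \to R$, $\phi(r) = r^p$, which is a ring homomorphism because $\operatorname{char} R = p$. For $a \in \mathfrak{m}$ we have $(1+a)^p = 1 + \phi(a)$, and if $a = c\,x^j + (\text{higher order})$ with $c \in \mathbf{F}_p^\times$, then $\phi(a) = c^p x^{jp} + (\text{higher order}) = c\,x^{jp} + (\text{higher order})$, which is nonzero in $R$ precisely when $jp < n$. Writing $d(u)$ for the largest $j$ with $u \in 1+\mathfrak{m}^j$ (for $u \neq 1$), this says: $u^p = 1$ if $p\,d(u) \geq n$, and $d(u^p) = p\,d(u)$ otherwise. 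Iterating, a unit $u$ of depth $j$ has order $p^k$ with $k$ minimal subject to $jp^k \geq n$.

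From this dictionary I can identify the torsion subgroups of $A$ with steps of the filtration. A unit $u$ satisfies $u^{p^k}=1$ if and only if $u=1$ or $d(u) \geq n/p^k$, i.e.\ if and only if $u \in 1+\mathfrak{m}^{\lceil n/p^k\rceil}$; hence $A[p^k] = 1+\mathfrak{m}^{\lceil n/p^k\rceil}$, which has order $p^{\,n-\lceil n/p^k\rceil}$ since $\mathfrak{m}^i = (x^i)$ has $\mathbf{F}_p$-dimension $n-i$. Set $f(k) = \log_p|A[p^k]| = n - \lceil n/p^k\rceil$. Using the standard identity that a finite abelian $p$-group $\bigoplus_{k\geq 1}\cyc_{p^k}^{e_k}$ has $|A[p^k]| = p^{\sum_i \min(i,k)e_i}$, one solves for the invariants as $e_k = 2f(k) - f(k-1) - f(k+1)$, giving
\[
e_k = \left\lceil \tfrac{n}{p^{k-1}}\right\rceil - 2\left\lceil \tfrac{n}{p^{k}}\right\rceil + \left\lceil \tfrac{n}{p^{k+1}}\right\rceil,
\]
which is exactly the exponent appearing in $U_n$. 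Finally, for $k \geq 1 + \log_p n$ all three ceilings equal $1$, so $e_k = 0$ and the direct sum may be truncated at $k < 1 + \log_p n$, completing the identification $A \cong \bigoplus_{1\leq k < 1+\log_p n}\cyc_{p^k}^{e_k}$ and hence $R^\times \cong U_n$.

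I expect no genuine obstacle once the Frobenius identity $(1+a)^p = 1 + \phi(a)$ and its effect on depth are established; the one place that requires care is the clean identification $A[p^k] = 1+\mathfrak{m}^{\lceil n/p^k\rceil}$ — that is, recognising that each $p^k$-torsion subgroup is literally a term of the $\mathfrak{m}$-adic filtration — after which computing the invariant factors is routine bookkeeping with $e_k = 2f(k)-f(k-1)-f(k+1)$.
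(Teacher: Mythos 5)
Your proof is correct and follows essentially the same route as the paper's: split off $\cyc_{p-1}$ as the units modulo $1+\mathfrak{m}$, count the elements of order dividing $p^k$ in $1+\mathfrak{m}$ via the degree of the leading term (getting $p^{\,n-\lceil n/p^k\rceil}$), and recover the multiplicities $e_k$ by the second difference $2f(k)-f(k-1)-f(k+1)$. The only difference is cosmetic: you justify the torsion count through the Frobenius and a depth function, where the paper asserts the same criterion $p^k j \geq n$ directly.
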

\begin{proof}
First, observe that an element of $R = \mathbf{F}_p[x]/(x^n)$ is a unit if and only if its constant term is nonzero. Thus, $R^\times \cong \cyc_{p-1} \times B$, where $B$ is the subgroup of units with constant term 1. Further, $|B| = p^{n-1}$ so every element of $B$ has order a power of $p$.

Now, let $\alpha_{p^k}$ denote the number of units in $B$ of order dividing $p^k$.  A non-identity element $f(x) = 1 + c_jx^j + \cdots$, with $c_j \neq 0$, is a unit of order dividing $p^k$ if and only if $p^kj \geq n$. Hence, $$\alpha_{p^k} = p^{n - \left\lceil\frac{n}{p^k}\right\rceil}.$$ Let $\beta_{p^k}$ denote the number of factors in the canonical decomposition of $B$ isomorphic to $\cyc_{p^k}$. Then, $$\alpha_{p^k} = p^{\beta_p} \cdot (p^2)^{\beta_{p^2}} \cdot \cdots \cdot (p^{k-1})^{\beta_{p^{k-1}}} \cdot (p^k)^{\beta_{p^k} + \beta_{p^{k+1}} + \,\cdots}.$$ Solving for $\beta_{p^k}$, one finds that $$\beta_{p^k} = \log_p\left[\frac{\alpha_{p^k}^2}{\alpha_{p^{k-1}} \alpha_{p^{k+1}}}\right].$$ The formula given in the statement now follows. 
\end{proof}

For a finite abelian group $G$, let $\trank(G)$ denote the number of factors in the canonical decomposition of $G$ as a direct product of cyclic groups of prime power order. The next proposition gives a useful lower bound on $\trank(G)$ for some finite abelian subgroup $G \leq R^\times$ and gives criteria for the presence of noncyclic abelian subgroups.

\begin{prop} Let $R$ be a ring of prime characteristic $p$ whose group of units contains an element of order $p^r \geq p^2$. \label{rankprop}
\begin{enumerate}
\item If $p > 2$, then $R^\times$ has a noncyclic finite abelian subgroup $G$ such that $\trank(G) \geq 1 + (p-1)p^{r-2}$. \label{rankpropodd}
\item If $p = 2$, then $R^\times$ has a finite abelian subgroup $G$ such that $\trank(G) \geq 2^{r-2}$. If $r \geq 3$, then $G$ is noncyclic. \label{rankpropeven}
\end{enumerate}
\end{prop}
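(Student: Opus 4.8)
The plan is to reuse the subring constructed immediately before the statement: the homomorphism $\varphi$ of \eqref{phimap} has image $S = \im\varphi \cong \F_p[x]/(x^n)$ for some $n \le p^r$, and I would simply take $G = S^\times$. This is an abelian subgroup of $R^\times$, and by Proposition \ref{unitprop} it is isomorphic to $U_n$. The one input that needs care is a lower bound on $n$. Since $\varphi$ sends $x$ to a unit of order $p^r$ and $S^\times \le R^\times$, that unit still has order $p^r$ inside $S^\times$; under the identification of $S$ with $\F_p[y]/(y^n)$, in which $y$ is nilpotent of index exactly $n$, it corresponds to $1+y$, and $(1+y)^{p^k} = 1+y^{p^k}$ in characteristic $p$, so its order is $p^{\lceil \log_p n\rceil}$. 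Therefore $\lceil \log_p n \rceil = r$, that is, $p^{r-1} < n \le p^r$. Everything after this is bookkeeping with the formula of Proposition \ref{unitprop}.

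The next step is to count the cyclic factors of prime power order of $G \cong U_n = \cyc_{p-1} \oplus B$, where $B = \bigoplus_k \cyc_{p^k}^{\beta_{p^k}}$ with $\beta_{p^k} = \lceil n/p^{k-1}\rceil - 2\lceil n/p^k\rceil + \lceil n/p^{k+1}\rceil$. Setting $a_k = \lceil n/p^k\rceil$ one has $\beta_{p^k} = (a_{k-1}-a_k)-(a_k-a_{k+1})$, so the number of cyclic factors of $B$, namely $\sum_{k\ge1}\beta_{p^k}$, telescopes to $a_0-a_1 = n-\lceil n/p\rceil$ (using that $a_k=1$ for all large $k$); alternatively one can note that $B[p] = 1 + x^{\lceil n/p\rceil}\F_p[x]/(x^n)$ has order $p^{\,n-\lceil n/p\rceil}$. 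I would then bound this using $n \ge p^{r-1}+1$: $n-\lceil n/p\rceil \ge n - \frac{n+p-1}{p} = \frac{(p-1)(n-1)}{p} \ge \frac{(p-1)p^{r-1}}{p} = (p-1)p^{r-2}$.

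Finally I would assemble the two cases. When $p=2$ the summand $\cyc_{p-1}$ is trivial, so $\trank(G) = n-\lceil n/2\rceil = \lfloor n/2\rfloor \ge 2^{r-2}$; and if $r\ge3$ then $B$ is a direct sum of at least $2^{r-2}\ge2$ nontrivial cyclic $2$-groups, hence contains $\cyc_2\oplus\cyc_2$, so $G$ is noncyclic. When $p>2$ the summand $\cyc_{p-1}$ is nontrivial and contributes at least one more cyclic factor of prime power order, so $\trank(G) \ge 1+(p-1)p^{r-2}$; and since $(p-1)p^{r-2}\ge2$ for $p\ge3$ and $r\ge2$, again $B$ contains $\cyc_p\oplus\cyc_p$ and $G$ is noncyclic. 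The only step I expect to be genuinely error-prone is this last one: $\trank(G)\ge2$ does not by itself force $G$ noncyclic (for instance $\cyc_6$ has $\trank=2$), so it is essential to observe that the two extra factors are cyclic of the \emph{same} prime order $p$.
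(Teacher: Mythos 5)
Your proof is correct and follows essentially the same route as the paper: both take $G = S^\times$ for the subring $S \cong \F_p[x]/(x^n)$ with $n \geq p^{r-1}+1$, and both extract the rank bound from Proposition \ref{unitprop}. The only differences are organizational --- the paper passes to the quotient $U_{p^{r-1}+1}$ and writes out its decomposition explicitly, whereas you compute $\trank(U_n)$ directly via the telescoping sum $\sum_k \beta_{p^k} = n - \lceil n/p\rceil$ --- and your closing caution that noncyclicity requires two cyclic factors of the \emph{same} prime $p$ is precisely the point the paper handles via the $p$-torsion summand.
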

\begin{proof}
As explained in the paragraph before Proposition \ref{unitprop} where the ring homomorphism $\varphi$ is defined, the domain of $\varphi$ is isomorphic to $\F_p[x]/(x^{p^r})$, so $S = \im \varphi$ is a subring of $R$ isomorphic to $\F_p[x]/(x^n)$ for some $n \leq p^r$. The ring $S$ must contain an element of order $p^r$, so in light of the decomposition of $U_n$ appearing in Proposition \ref{unitprop} we must have $1 \leq r < 1 + \log_p n$. Thus $n \geq p^{r -1} + 1$. The quotient map $S \longrightarrow \F_p[x]/(x^{p^{r-1} + 1})$ is surjective on units since its kernel, the ideal $(x^{p^{r-1} + 1})$, is a nilpotent ideal of $S$. Consequently, $U_{p^{r-1} + 1}$ is a quotient of a subgroup $G$ of $R^\times$. Since taking quotients cannot increase the number of factors in the canonical decomposition of a finite abelian group, $\trank(G) \geq \trank(U_{p^{r-1} + 1})$. Using Proposition \ref{unitprop}, one can check that $$U_{p^{r-1} + 1} = \cyc_{p-1} \times \cyc_{p^{r-1}}^{p-1} \times \cyc_{p^{r-2}}^{(p-1)^2} \times \cyc_{p^{r-3}}^{p(p-1)^2} \times \cdots \times \cyc_{p^2}^{p^{r-4}(p-1)^2} \times \cyc_p^{p^{r-3}(p-1)^2}.$$ If $p > 2$, $\trank(U_{p^{r-1} + 1}) = 1 + (p-1)p^{r-2}$. If $p = 2$, $\trank(U_{p^{r-1} + 1}) = 2^{r-2}$.

When $p > 2$, $\trank(U_{p^{r-1} + 1}) \geq 3$, and if $p = 2$ and $r \geq 3$, then $\trank(U_{p^{r-1} + 1}) \geq 2$. In both cases, $\trank(H) \geq 2$, where $H$ is the $p$-torsion summand of $U_{p^{r-1} + 1}$. It follows that $G$ is a noncyclic abelian subgroup.
\end{proof}

Before turning to the proof of Theorem \ref{main} for rings of positive characteristic in the next section, we collect a few results from the theory of noncommutative rings. Let $R$ be a ring with a finite group of units and let $I$ be a two sided ideal contained in the Jacobson radical $J$ of $R$. The Jacobson radical satisfies $J = \{ x \in R\, | \, 1 + RxR \subseteq R^\times\}$. Now, if $a + I$ is a unit of $R/I$, then there is an element $b \in R$ such that $ab - 1 \in I \subseteq J$, so $ab$ is a unit of $R$ and therefore $a$ has a right inverse. Similarly, $a$ also has a left inverse and hence $a$ must be a unit of $R$. This proves that the quotient map $R \longrightarrow R/I$ is surjective on units.  The remainder of the following proposition follows easily from this fact.

\begin{prop} Let $R$ be a ring with a finite group of units and let $I$ be a two-sided ideal of $R$ contained in the Jacobson radical. Then:
\begin{enumerate}
\item The quotient map $\map{\varphi}{R}{R/I}$ induces a surjective group homomorphism $$\map{\varphi^\times}{R^\times}{(R/I)^\times}.$$
\item The set $1 + I = \ker \varphi^\times$ is a normal subgroup of $R^\times$, and $I$ is finite since $R^\times$ is finite.
\item We have $|R^\times| = |(R/I)^\times||\ker \varphi^\times| =  |(R/I)^\times||I|$ and hence $|I|$ divides both $|R|$ and $|R^\times|$.
\end{enumerate}
\label{junits}
\end{prop}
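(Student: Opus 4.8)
The plan is to build directly on the observation made in the paragraph preceding the statement, namely that $1 + J \subseteq R^\times$ (a consequence of the characterization $J = \{x \in R : 1 + RxR \subseteq R^\times\}$) and hence that the quotient map $\varphi$ is surjective on units. Everything in the three claims is then routine group theory, so I do not expect a serious obstacle; the only point needing a moment's care is identifying $\ker \varphi^\times$ exactly.

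For part (1), I would first note that $\varphi$ carries units to units: if $a \in R^\times$ then $\varphi(a)\varphi(a^{-1}) = \varphi(a^{-1})\varphi(a) = 1 + I$, so $\varphi$ restricts to a group homomorphism $\varphi^\times : R^\times \to (R/I)^\times$. Surjectivity is precisely what was shown above: given a unit $a + I$ of $R/I$, choose $b \in R$ with $ab - 1 \in I$ and $ba - 1 \in I$; since $I \subseteq J$, both $ab$ and $ba$ lie in $1 + J \subseteq R^\times$, so $a$ has a right inverse and a left inverse and hence $a \in R^\times$ with $\varphi^\times(a) = a + I$.

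For part (2), the kernel is $\{a \in R^\times : a - 1 \in I\}$; but since $I \subseteq J$ we have $1 + I \subseteq 1 + J \subseteq R^\times$, so the kernel is exactly $1 + I$, which is a normal subgroup of $R^\times$ as the kernel of a homomorphism. The additive translation $x \mapsto 1 + x$ is a bijection $I \to 1 + I = \ker \varphi^\times$, so $|I| = |\ker \varphi^\times| \le |R^\times| < \infty$, giving finiteness of $I$. For part (3), the first isomorphism theorem gives $R^\times/\ker\varphi^\times \cong (R/I)^\times$, hence $|R^\times| = |(R/I)^\times|\,|\ker\varphi^\times| = |(R/I)^\times|\,|I|$, so $|I|$ divides $|R^\times|$; and $I$ is a subgroup of the additive group $(R,+)$, so $|I|$ divides $|R|$ by Lagrange (the relevant case being $R$ finite). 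The main obstacle, such as it is, is simply noticing that $\ker\varphi^\times$ equals $1+I$ on the nose rather than $(1+I)\cap R^\times$, which is where the hypothesis $I \subseteq J$ is used one final time.
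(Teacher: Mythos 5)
Your proposal is correct and follows essentially the same route as the paper: the surjectivity of $\varphi$ on units is established exactly as in the paragraph preceding the proposition (via the characterization of $J$), and the remaining claims are deduced by the same routine group theory the paper leaves to the reader. Your explicit identification of $\ker\varphi^\times$ with $1+I$ (rather than $(1+I)\cap R^\times$), using $1+I\subseteq 1+J\subseteq R^\times$, is precisely the small point the paper's ``follows easily'' glosses over.
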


The next proposition is part of the theory of idempotents in noncommutative rings; see \cite[\S 21, \S 22]{lam} for more details.  Central idempotents in such rings may be used to find block decompositions, and it is natural to ask whether information from a block decomposition of a quotient may give information about the original ring. If $R$ is a ring and $I$ is a nilpotent two-sided ideal contained in the Jacobson radical of $R$, then there is a bijection between the set of central idempotents of $R$ and the central idempotents of $R/I^2$ (see \cite[22.9]{lam}). We only need the following proposition, which is a corollary to this fact.

\begin{prop} If $R$ is a ring and $I$ is a nilpotent two-sided ideal contained in the Jacobson radical of $R$, then $R$ is indecomposable if and only if $R/I^2$ is indecomposable.
\label{j2indecomp}
\end{prop}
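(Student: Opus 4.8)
The plan is to translate indecomposability into the language of central idempotents and then invoke the cited lifting result. Recall that a ring $S$ is decomposable if and only if it contains a central idempotent $e$ with $e \neq 0$ and $e \neq 1$: such an $e$ yields the decomposition $S \cong Se \times S(1-e)$ into two nontrivial rings (with multiplicative identities $e$ and $1-e$), and conversely a decomposition $S \cong S_1 \times S_2$ with both factors nontrivial produces the nontrivial central idempotent $(1_{S_1},0)$. So it suffices to show that $R$ has a nontrivial central idempotent if and only if $R/I^2$ does.

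First I would check that $I^2$ satisfies the hypotheses needed to apply the cited fact: it is a two-sided ideal, it is nilpotent since $I$ is (if $I^N = 0$ then $(I^2)^N = 0$), and it is contained in the Jacobson radical since $I^2 \subseteq I \subseteq J$. By \cite[22.9]{lam}, the quotient map $\pi \colon R \to R/I^2$ then induces a bijection between the set of central idempotents of $R$ and the set of central idempotents of $R/I^2$.

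It remains to observe that this bijection respects triviality. Since it is induced by the ring homomorphism $\pi$, it sends $0$ to $0$ and $1$ to $1$; being a bijection, it therefore carries no other central idempotent of $R$ to $0$ or to $1$. Hence $\pi$ restricts to a bijection between the nontrivial central idempotents of $R$ and those of $R/I^2$. Combined with the first paragraph, this gives the equivalence: $R$ is decomposable if and only if $R/I^2$ is decomposable, i.e. $R$ is indecomposable if and only if $R/I^2$ is indecomposable.

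Since the substantive content is supplied by \cite[22.9]{lam}, there is essentially no obstacle here; the only points that deserve care are the verification that $I^2$, rather than $I$ itself, is the ideal to quotient by — central idempotents need not lift modulo $I$ in general, which is exactly why the statement is phrased in terms of $R/I^2$ — and the bookkeeping that the correspondence sends nontrivial idempotents to nontrivial idempotents.
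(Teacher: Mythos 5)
Your proof is correct and takes essentially the same route as the paper, which likewise treats the proposition as an immediate corollary of the central-idempotent bijection from \cite[22.9]{lam} together with the standard equivalence between decomposability and the existence of a central idempotent other than $0$ and $1$. One quibble with your closing aside: central idempotents \emph{do} lift modulo any nilpotent two-sided ideal contained in the radical (so the corresponding statement for $R/I$ is equally true); the $I^2$ appears in the statement because that is the quotient actually needed later (in the $\dih_{24}$ argument, where $1+J^2$ rather than $1+J$ is the relevant kernel), not because lifting fails modulo $I$.
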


Finally, we need a structure theorem for finite rings with trivial Jacobson radical. A ring with trivial Jacobson radical is called semiprimitive, and any Artinian semiprimitive ring is semisimple.  The Artin-Wedderburn Theorem then implies the ring is a product of matrix rings over division rings. If the ring is finite, then each division ring is finite and therefore a field by Wedderburn's Little Theorem. This proves the following proposition.

\begin{prop} Let $R$ be a finite ring of prime characteristic $p$. If $R$ has trivial Jacobson radical, then it is a finite product of matrix rings of the form $\mathbf{M}_m(\F_{p^k})$ where $m$ and $k$ are positive integers.
\label{jzero}
\end{prop}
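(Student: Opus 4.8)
The plan is to assemble three classical facts: that a finite ring is Artinian, that the Artin--Wedderburn theorem classifies semisimple rings, and that every finite division ring is a field (Wedderburn's little theorem).

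First I would note that since $R$ is finite, it has finite length as a module over itself on either side and is therefore both left and right Artinian. As recalled in the paragraph preceding the statement, a ring with trivial Jacobson radical is semiprimitive, and an Artinian semiprimitive ring is semisimple; hence the hypothesis $J(R) = 0$ makes $R$ a semisimple ring. The Artin--Wedderburn theorem then gives a decomposition $R \cong \prod_{i=1}^{t} \mathbf{M}_{m_i}(D_i)$ for some positive integers $m_i$ and some division rings $D_i$.

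Next I would pin down the $D_i$. Each $D_i$ embeds into $R$ (for instance as scalar matrices inside the $i$th factor), so each $D_i$ is finite; by Wedderburn's little theorem a finite division ring is commutative, so $D_i \cong \F_{q_i}$ for some prime power $q_i$. Finally I would track characteristics: the characteristic of the product $R$ is the least common multiple of the characteristics of its factors, so the characteristic of each $\mathbf{M}_{m_i}(\F_{q_i})$ divides $\ch R = p$; since a nontrivial ring cannot have characteristic $1$, each factor has characteristic exactly $p$. As $\mathbf{M}_{m_i}(\F_{q_i})$ and $\F_{q_i}$ have the same characteristic, this forces $q_i = p^{k_i}$ for some positive integer $k_i$, whence $R \cong \prod_{i=1}^{t} \mathbf{M}_{m_i}(\F_{p^{k_i}})$, as claimed.

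There is no substantive obstacle here; every step is a direct appeal to a standard theorem. The only point requiring a little care is the final bookkeeping on characteristics, ensuring that the primality of $\ch R$ rules out a factor of characteristic $1$ and thus that every $\F_{q_i}$ is a field of characteristic $p$.
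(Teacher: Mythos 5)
Your proof is correct and follows the same route as the paper: finite implies Artinian, semiprimitive plus Artinian implies semisimple, Artin--Wedderburn gives matrix rings over division rings, and Wedderburn's little theorem plus the characteristic-$p$ hypothesis identifies the division rings as fields $\F_{p^k}$. The extra bookkeeping you do on characteristics is a harmless elaboration of what the paper leaves implicit.
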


\section{Rings of positive characteristic} \label{sec:finite}

The goal of this section is to prove Theorem \ref{main} for rings of positive characteristic. To begin, we eliminate the unlisted dihedral groups in the characteristic 2 case.

\begin{prop} Let $n$ be a positive integer such that $8 \nmid n$.  If $R$ is a finite ring of characteristic $2$ with trivial Jacobson radical and unit group $\dih_{2n}$, then $n = 3$.
\label{char2j0}
\end{prop}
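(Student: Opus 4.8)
The plan is to apply Proposition \ref{jzero}: since $R$ is a finite ring of characteristic $2$ with trivial Jacobson radical, we may write $R \cong \prod_{i=1}^t \mathbf{M}_{m_i}(\F_{2^{k_i}})$, and hence $\dih_{2n} \cong R^\times \cong \prod_{i=1}^t \GL_{m_i}(\F_{2^{k_i}})$. First I would observe that $\dih_{2n}$ is the direct product of at most two indecomposable groups (Proposition \ref{dihedralprop}(3)), so $t \le 2$; I will organize the argument by the value of $t$. The key fact I will use repeatedly is that the only $\GL_{m}(\F_{2^k})$ that can be a nontrivial \emph{solvable} group are small: $\GL_1(\F_2) = 1$, $\GL_1(\F_4) \cong \cyc_3$, $\GL_1(\F_8) \cong \cyc_7$, $\GL_2(\F_2) \cong \dih_6$, while $\GL_m(\F_{2^k})$ for $(m,k)$ outside a short list contains a nonabelian simple section (e.g.\ $\mathrm{PSL}_m(\F_{2^k})$ or $A_n$ for $n\ge5$ via permutation matrices), contradicting solvability of $\dih_{2n}$. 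A dihedral group is solvable, so each factor $\GL_{m_i}(\F_{2^{k_i}})$ must lie on this list.

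For $t = 1$: $\dih_{2n} \cong \GL_m(\F_{2^k})$, and from the list the only nonabelian possibility is $\GL_2(\F_2) \cong \dih_6$, giving $n = 3$; the abelian options $\cyc_3$ and $\cyc_7$ are not dihedral (they have odd order $>1$, so cannot be $\dih_2 \cong \cyc_2$). For $t = 2$: $\dih_{2n} \cong \GL_{m_1}(\F_{2^{k_1}}) \times \GL_{m_2}(\F_{2^{k_2}})$, so $\dih_{2n}$ decomposes, forcing $n = 2k$ with $k$ odd (Proposition \ref{dihedralprop}(3)), and then $\{\GL_{m_1}(\F_{2^{k_1}}), \GL_{m_2}(\F_{2^{k_2}})\} = \{\cyc_2, \dih_{2k}\}$. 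But no $\GL_m(\F_{2^k})$ is isomorphic to $\cyc_2$: a group of order $2$ must be $\GL_1(\F_3)$ or $\GL_1(\F_{p^k})$ with $p^k = 3$, impossible in characteristic $2$ since $|\GL_1(\F_{2^k})| = 2^k - 1$ is odd and $|\GL_m(\F_{2^k})|$ for $m\ge2$ is divisible by $2^{\binom m2 k}\ge 2$ but also by the odd number $2^k-1>1$ unless... — cleanest is simply that $|\GL_1(\F_{2^k})|=2^k-1$ is odd so never $2$, and for $m \ge 2$, $\GL_m(\F_{2^k})$ is nonabelian. Hence $t=2$ is impossible, and the only surviving case is $t=1$ with $n=3$.

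The main obstacle is making precise and citing the claim that $\GL_m(\F_{2^k})$ is nonsolvable outside the short list $\{(1,1),(1,2),(1,3),(2,2)\}$: this rests on the classification of $\mathrm{PSL}_m(q)$ as simple for $(m,q)\notin\{(2,2),(2,3)\}$ together with the fact that $\GL_2(\F_8)$, $\GL_3(\F_2)\cong\mathrm{PSL}_3(\F_2)$, and all larger general linear groups over $\F_{2^k}$ have such a simple composition factor. I would dispatch this by a clean case split on whether $m=1$ (reducing to whether $2^k - 1 \in \{1,3,7\}$ has the right structure, but actually any cyclic group here is harmless unless it must be dihedral), $m=2$ (where $|\GL_2(\F_{2^k})| = 2^k(2^k-1)(2^{2k}-1)$, solvable only for $k=1$), or $m\ge 3$ (where $\mathrm{PSL}_m(\F_{2^k})$ is simple nonabelian for all $k\ge1$), citing a standard reference for the simplicity statements; the dihedral hypothesis then finishes each branch as above.
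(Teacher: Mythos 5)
Your proposal is correct, and it shares the paper's skeleton --- Artin--Wedderburn via Proposition \ref{jzero}, then the indecomposability statement in Proposition \ref{dihedralprop}(3) together with the observation that no $\GL_m(\F_{2^k})$ is isomorphic to $\cyc_2$ to reduce to a single factor $\dih_{2n}\cong\GL_m(\F_{2^k})$ --- but the step that kills the remaining cases is genuinely different. The paper uses the hypothesis $8\nmid n$: the exponent of $2$ in $|\GL_m(\F_{2^k})|$ is $km(m-1)/2$, so $km(m-1)\le 6$, and the handful of surviving pairs $(m,k)$ are settled by comparing centers ($\cyc_7$, $\cyc_3$, trivial) with the center of a dihedral group. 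You instead invoke solvability of dihedral groups against nonsolvability of $\GL_m(\F_{2^k})$ outside $m=1$ and $(m,k)=(2,1)$, resting on simplicity of $\mathrm{PSL}_m(q)$. Your route buys more --- it never uses $8\nmid n$, so it proves the conclusion for every $n$ --- but at the price of importing a nontrivial classification fact where the paper stays entirely elementary (an order formula and centers). Two small points to tidy: your ``short list'' of solvable $\GL_m(\F_{2^k})$ as first stated wrongly stops $\GL_1$ at $k=3$ (every $\GL_1(\F_{2^k})\cong\cyc_{2^k-1}$ is cyclic, hence solvable; you correctly recover in the final paragraph by noting these have odd order and so cannot carry a dihedral or $\cyc_2$ factor), and $t\le 2$ should be ``at most two \emph{nontrivial} unit-group factors,'' since $\mathbf{M}_1(\F_2)$ contributes a trivial factor and can occur arbitrarily often; the uniqueness clause in Proposition \ref{dihedralprop}(3) then pins the nontrivial factors down as you say.
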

\begin{proof}
Suppose $R$ is an finite ring of characteristic 2 with trivial Jacobson radical such that $R^\times = \dih_{2n}$ for some $n$, where $8 \nmid n$.  By Proposition \ref{jzero}, $R$ is a product of matrix rings of the form $\mathbf{M}_m(\F_{2^k})$.  The dihedral group $\dih_{2n}$ is indecomposable unless $n = 2v$ for $v$ odd, in which case $$\dih_{4v} = \cyc_2 \times \dih_{2v}$$ by Proposition \ref{dih-prop} (\ref{dih-prop-decomp}). Since this direct product decomposition is unique (up to the order of the factors) and $\cyc_2$ cannot be the group of units of any such matrix ring, we must have $\dih_{2n} \cong \GL_m(\F_{2^k})$. It is well known that the order of $\GL_m(\F_{p^k})$ is $$(p^{mk} - 1)(p^{mk} - p^k) \cdot \cdots \cdot (p^{mk} - p^{(m-1)k}).$$ The exponent of $p$ in the prime factorization of this quantity is $km(m-1)/2$. Since $p = 2$ and $8 \nmid n$, we have $km(m-1) \leq 6$.  When $m = 1$, $\GL_m(\F_{2^k}) = \F_{2^k}^\times$ has odd order, but the dihedral group has even order, so $m > 1$. If $k = 3$, then $m = 2$; the center of $\GL_2(\F_{8})$ is $\F_8^\times = \cyc_7$, which is not the center of any dihedral group. If $k = 2$, then $m = 2$; the center of $\GL_2(\F_4)$ is $\F_4^\times = \cyc_3$, which is not the center of any dihedral group. If $k = 1$, then $m = 2$ or 3; if $m = 2$, then we obtain $\GL_2(\F_2) \cong \dih_6$. If $m = 3$, then $\GL_3(\F_2)$ has trivial center and order 168, but the center of $\dih_{168}$ has order 2.
\end{proof}

\begin{prop} If $\dih_{2n}$ is realizable in characteristic $2$, then $n$ is a divisor of $12$. \label{char2main}
\end{prop}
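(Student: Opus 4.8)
The plan is to reduce to a finite ring via Proposition~\ref{reducefinite} --- so we may assume $R$ is finite, $\mathrm{char}\,R = 2$, and $R^\times \cong \dih_{2n}$ --- and then to bound the odd part and the $2$-part of $n$ separately. Write $J$ for the Jacobson radical of $R$. By Proposition~\ref{jzero}, $R/J \cong \prod_i \mathbf{M}_{m_i}(\F_{2^{k_i}})$, and by Proposition~\ref{junits} the quotient map induces a surjection $\dih_{2n} \twoheadrightarrow (R/J)^\times = \prod_i \GL_{m_i}(\F_{2^{k_i}})$. Every quotient of a dihedral group is again dihedral (with $\dih_2 \cong \cyc_2$ and $\dih_4 \cong \cyc_2 \times \cyc_2$) or trivial; in particular it is solvable, and its center is a $2$-group of order at most $4$. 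From this I would deduce $(R/J)^\times \cong 1$ or $\dih_6$: the central subgroup $\F_{2^{k_i}}^\times$ has odd order $2^{k_i}-1$, so $k_i = 1$; no $m_i$ can exceed $2$, since $\GL_{m_i}(\F_2)$ would then be non-solvable; and at most one $m_i$ can equal $2$, since $\GL_2(\F_2) \times \GL_2(\F_2) \cong \dih_6 \times \dih_6$ is not dihedral (it has no cyclic subgroup of index~$2$).

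The odd part of $n$ follows at once: by Proposition~\ref{junits}, $2n = |R^\times| = |(R/J)^\times|\cdot|1+J|$, and $|1+J| = |J|$ is a power of $2$ since $J$ is an $\F_2$-subspace of $R$; as $|(R/J)^\times| \in \{1,6\}$, the odd part of $n$ divides $3$. Hence $n$ equals $2^a$ or $3\cdot 2^a$, and it remains to show $a \leq 2$.

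Suppose instead $8 \mid n$. Then $t = r^{n/8}$ is a unit of order $8$, and the commutative subring $S = \F_2[t] = \F_2[t+1]$ satisfies $(t+1)^8 = t^8 + 1 = 0$ in characteristic $2$, so $S \cong \F_2[x]/(x^{n'})$ with $n' \leq 8$ the nilpotency index of $t+1$. The order of $t = 1+(t+1)$ in $S^\times$ is $2^{\lceil \log_2 n' \rceil}$, which forces $n' \geq 5$. Then $S^\times = U_{n'}$ is an abelian group of order $2^{n'-1} \geq 16$ and of exponent $8$, hence noncyclic, and it contains the element $t$ of order $8$. But $S^\times$ is a subgroup of $\dih_{2n}$, and the only noncyclic abelian subgroups of a dihedral group are Klein four-groups, which have exponent $2$ --- a contradiction. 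Therefore $8 \nmid n$, so $a \leq 2$ and $n \in \{1,2,3,4,6,12\}$, precisely the divisors of $12$.

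I expect the last step to be the real obstacle. Proposition~\ref{rankprop} as stated only rules out an element of order $16$ in $\dih_{2n}$ (its rank bound $2^{r-2} \leq 2$ for abelian subgroups of $\dih_{2n}$ gives $r \leq 3$); it does not by itself eliminate the case where $8$ exactly divides $n$. For that one has to control the \emph{exponent} of $(\F_2[x]/(x^{n'}))^\times$ rather than just its rank --- which is essentially what the proof of Proposition~\ref{rankprop} does (it already produces a subgroup of $R^\times$ having $U_5 \cong \cyc_8 \times \cyc_2$ as a quotient). The remaining steps are routine given Propositions~\ref{jzero}, \ref{junits}, and~\ref{dihedralprop}.
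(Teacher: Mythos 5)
Your proof is correct, and although it shares the paper's skeleton (reduce to a finite ring, analyze the semisimple quotient $R/J$ via Proposition~\ref{jzero}, and use the subring generated by a unit of order $8$ to forbid $8 \mid n$), both main steps are executed differently. For the semisimple quotient, the paper's Proposition~\ref{char2j0} first reduces to a single indecomposable factor, bounds the $2$-adic valuation of $|\GL_m(\F_{2^k})|$ using the hypothesis $8 \nmid n$, and then checks the surviving pairs $(m,k)$ by computing centers; you instead work with the whole product, using solvability of dihedral groups to force $m_i \le 2$ and the odd central subgroups $\F_{2^{k_i}}^\times$ to force $k_i = 1$, which needs no hypothesis on $n$. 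You then replace the paper's case analysis on $|J| \in \{1,2,4,8\}$ and on which normal subgroup of $\dih_{2n}$ the group $1+J$ can be with the single count $2n = |(R/J)^\times|\,|J|$ from Proposition~\ref{junits}; this is shorter and yields the same list. Finally, your closing remark about Proposition~\ref{rankprop} is a genuine catch: for $r = 3$ its statement only produces a noncyclic abelian subgroup, and $\dih_{2n}$ does contain Klein four-groups whenever $n$ is even, so the paper's sentence ``no such subgroup exists for $n \neq 2$'' is not literally correct as an application of that proposition. The repair is exactly the one you supply (and the one implicit in the proof of Proposition~\ref{rankprop}): the abelian subgroup in question has $U_5 \cong \cyc_8 \times \cyc_2$ as a quotient, hence exponent at least $8$, whereas every noncyclic abelian subgroup of a dihedral group is a Klein four-group of exponent $2$. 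Your self-contained version with $S \cong \F_2[x]/(x^{n'})$, $n' \ge 5$, of order at least $16$ and exponent $8$, does this cleanly.
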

\begin{proof}
Suppose $R$ is a ring of characteristic 2 whose group of units is $\dih_{2n}$. If $8 \, | \, n$, then $R$ contains a unit of order $2^3$. By Proposition \ref{rankprop} (\ref{rankpropeven}), $\dih_{2n}$ contains a noncyclic abelian subgroup, but no such subgroup exists for $n \neq 2$. Hence, $8 \nmid n$.

As observed in Proposition \ref{reducefinite}, we may assume $R$ is finite. Note further that $R$ and its Jacobson radical $J$ have orders which are powers of 2.  But $|J|$ must also divide $|R^\times|$ so $|J| = 1, 2, 4$ or 8.

If $|J| = 1$, then $\dih_{2n} = \dih_6$ by Proposition \ref{char2j0}.

If $|J| = 2$, then $1 + J$ is a normal subgroup of order 2 in $\dih_{2n}$.  If this subgroup lies in the rotation subgroup then 2 divides $n$ and $\dih_{n} = (R/J)^\times$ is realizable in characteristic 2 by a ring with trivial Jacobson radical, forcing $n = 6$ by Proposition \ref{char2j0}. Otherwise, $1+J$ is one of the two order $n$ subgroups  $\langle r^2, s\rangle$ or $\langle r^2, rs\rangle$, forcing $n = 2$. Finally, we may have $1 + J = \dih_{2n}$ forcing $n = 1$.

If $|J| = 4$, then $1 + J$ is a normal subgroup of order 4 in $\dih_{2n}$. If this subgroup lies in the rotation subgroup then 4 divides $n$ and $\dih_{n/2} = (R/J)^\times$ is realizable in characteristic 2 by a ring with trivial Jacobson radical, forcing $n = 12$ by Proposition \ref{char2j0}.  If $1 + J$ does not consist entirely of rotations, then $n$ must be even and $1+J = \langle r^2, s\rangle$ or $\langle r^2, rs\rangle$. These subgroups have order $n$, forcing $n = 4$. If $1 + J = \dih_{2n}$, then $n = 2$.

If $|J| = 8$, then since $8 \nmid n$ the only possibility is that $1 + J = \dih_{2n}$ and $n = 4$.
\end{proof}

In the characteristic 2 case, it remains to eliminate $\dih_{24}$. To that end, we first prove the following proposition.

\begin{prop} If $R$ is a ring of characteristic $2$ generated by its units with $R^\times \cong \dih_{12}$, then $R$ is decomposable. \label{d12decomp}
\end{prop}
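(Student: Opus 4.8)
The plan is to show that $R$ must contain a central idempotent other than $0$ and $1$, which forces a nontrivial product decomposition. Since $R$ is generated by its units and $\ch R = 2$, $R$ is a quotient of $\F_2[\dih_{12}]$; by Proposition \ref{dih-prop} (\ref{dih-prop-decomp}), $\dih_{12} \cong \cyc_2 \times \dih_6$, so $\F_2[\dih_{12}] \cong \F_2[\cyc_2] \otimes_{\F_2} \F_2[\dih_6] \cong \F_2[x]/(x^2) \otimes_{\F_2} \F_2[\dih_6]$. The first step is to pin down the central element coming from the $\cyc_2$-factor: let $t \in \dih_{12}$ be the central involution (the generator of the $\cyc_2$ direct factor, which in the rotation notation is $r^3$ if we write $\dih_{12} = \langle r, s \rangle$). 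Then $t$ is central in $R$, $t^2 = 1$, so $(1+t)^2 = 1 + t^2 = 0$ in characteristic $2$, i.e. $1 + t$ is a central nilpotent element.

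Next I would consider the ideal $I = (1+t)R = R(1+t)$, which is a nilpotent (hence Jacobson-radical) two-sided ideal with $I^2 = 0$. By Proposition \ref{j2indecomp}, $R$ is indecomposable if and only if $R/I^2 = R/I$ is indecomposable (since $I^2 = 0$). So it suffices to show $R/I$ is decomposable. Now $R/I = R/(1+t)R$ is the quotient in which $t$ is identified with $1$; this is the largest quotient of $R$ in which the central $\cyc_2$ acts trivially, so $R/I$ is generated by the image of $\dih_{12}$ with $t \mapsto 1$, i.e. it is a quotient of $\F_2[\dih_{12}/\langle t\rangle] = \F_2[\dih_6]$. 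The key step is then to analyze $(R/I)^\times$: the surjection $\dih_{12} = R^\times \twoheadrightarrow (R/I)^\times$ (surjective by Proposition \ref{junits}, since $I \subseteq J$) has $t$ in its kernel, so $(R/I)^\times$ is a quotient of $\dih_{12}/\langle t\rangle \cong \dih_6$. I expect that in fact $(R/I)^\times \cong \dih_6$ (a priori it could be a proper quotient, but one can rule out the smaller quotients — $\dih_6$ has only $\langle r\rangle$'s subgroups and the two $\dih_3$'s as proper normal subgroups, and one checks the unit group cannot collapse that far, e.g. by a counting/order argument using $|R^\times| = |(R/I)^\times|\cdot|I|$ and $|I| \in \{1,2\}$). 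Given $(R/I)^\times \cong \dih_6$, I would invoke the positive-characteristic analysis already available: $R/I$ is a finite $\F_2$-algebra with dihedral units $\dih_6$, and the relevant realizations of $\dih_6$ in characteristic $2$ force a decomposition — concretely, writing $R/I$ modulo its own Jacobson radical and applying Proposition \ref{jzero} / the argument of Proposition \ref{char2j0}, one sees $\dih_6 = \GL_2(\F_2)$ is realized only with semisimple part $\mathbf{M}_2(\F_2)$, and chasing idempotents back (using that $\F_2[\dih_6] \cong \F_2[x]/(x^2) \times \mathbf{M}_2(\F_2)$ already exhibits the splitting) shows $R/I$ is a nontrivial product.

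Alternatively — and this may be the cleaner route — I would work directly inside $R$: since $\dih_{12}$ contains an element $r$ of order $6$, the group algebra map $\F_2[x]/(x^6-1) \to R$, $x \mapsto r$, has image a subring, and $x^6 - 1 = (x^3-1)^2 = (x-1)^2(x^2+x+1)^2$ over $\F_2$; the idempotent of $\F_2[x]/(x^6-1)$ separating the $(x-1)$-part from the $(x^2+x+1)$-part is central in that subring. The main obstacle is ensuring this idempotent is central in all of $R$, not just in the subring generated by $r$ — this requires showing $s$ commutes with it (or using that $\dih_{12}$'s conjugation action fixes it, since $s r s = r^{-1}$ generates the same subgroup, so conjugation by $s$ permutes the two components trivially). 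Once centrality in $R$ is established, the idempotent is nontrivial (it is neither $0$ nor $1$ because $r$ has even order so $r^2 + r + 1 \neq 0$ and $r \neq 1$), giving the desired decomposition. I expect the centrality verification to be the genuine difficulty; everything else is routine idempotent bookkeeping.
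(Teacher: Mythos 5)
Both of your routes stall at the same place, and in both cases the step you wave through is precisely where the real work lies. In the first route, the reduction to ``show $R/I$ is decomposable, where $(R/I)^\times \cong \dih_6$'' cannot be completed from the unit group alone: $\mathbf{M}_2(\F_2)$ is an indecomposable ring of characteristic $2$, generated by its units, with unit group $\GL_2(\F_2) \cong \dih_6$, so your assertion that ``the relevant realizations of $\dih_6$ in characteristic $2$ force a decomposition'' is simply false. The indecomposable quotients of $\F_2[\dih_6] \cong \F_2[x]/(x^2) \times \mathbf{M}_2(\F_2)$ with unit group $\dih_6$ include $\mathbf{M}_2(\F_2)$ itself, and to finish you would have to rule out $R/I \cong \mathbf{M}_2(\F_2)$ by some argument that uses more than $(R/I)^\times$ (for instance, that $I = J(R)$ would then be a one-dimensional $\F_2$-space carrying a unital module structure over $R/J \cong \mathbf{M}_2(\F_2)$, which is impossible since every nonzero $\mathbf{M}_2(\F_2)$-module has dimension at least $2$). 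You supply no such argument. There is also a citation slip: with $I^2 = 0$, Proposition \ref{j2indecomp} compares $R$ with $R/I^2 = R$, not with $R/I$, so as invoked it says nothing; you need the lifting of central idempotents modulo the nilpotent ideal $I$ itself.

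The second route is essentially the paper's argument --- the idempotent separating the two blocks of $\F_2[x]/(x^6-1)$ is $r^2 + r^4$ (with complement $1 + r^2 + r^4$) --- but you have misplaced the difficulty. Centrality is trivial: $s(r^2+r^4)s^{-1} = r^{-2} + r^{-4} = r^4 + r^2$. What is not trivial is nontriviality of the idempotent in $R$. Your reason (``$r$ has even order so $r^2+r+1 \neq 0$ and $r \neq 1$'') rules out only one degenerate case; it does not exclude $1 + r^2 + r^4 = 0$, i.e.\ $r^2 + r^4 = 1$. That case is perfectly consistent with $r$ having order $6$: the local ring $\F_2[x]/((x^2+x+1)^2)$, which is the block where $1 + r^2 + r^4$ dies, has units of order $6$. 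Killing this case is where the paper does its real work: assuming $r^2 + r^4 = 1$, the element $a = s + sr$ satisfies $a^2 = r + r^5$ and $a^4 = (r+r^5)^2 = r^2 + r^4 = 1$, so $a$ is a unit of order dividing $4$; since $\dih_{12}$ has no elements of order $4$, $a^2 = 1$, whence $r + r^5 = 1$ and, multiplying by $r^3$, $r^3 = r^4 + r^2 = 1$, contradicting the order of $r$. Without this (or an equivalent) computation, your proof does not go through.
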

\begin{proof}
Let $R$ be a a ring of characteristic 2 generated by its units with $R^\times \cong \dih_{12}$. The ring $R$ is a quotient of $\F_2[\dih_{12}] $.  The element $r^2 + r^4$ is a central idempotent of $\F_2[\dih_{12}] $; we therefore have a decomposition $$\F_2[\dih_{12}] = (r^2 + r^4) \times (1 + r^2 + r^4) = R_1 \times R_2.$$ In the ring $R_2$, $r^2 + r^4 = 0$, so $r^2 = 1$ in $R_2$. It is straightforward to check that $R_2$ is a quotient of $\F_2[\dih_4] \cong \F_2[x, y]/(x^2, y^2)$, a ring with 16 elements and 8 units of order dividing 2.

Assume to the contrary that $R$ is an indecomposable ring. Then $R$ is either a quotient of $R_1$ or $R_2$. If $R$ is a quotient of $R_2$, then it is a quotient of $\F_2[\dih_4]$.  Since $\F_2[\dih_4]$ has no units of order 3, $R$ must be a quotient of $R_2$ of order at most 8.  But no such ring can have $\dih_{12}$ as its group of units. Hence, $R$ is a quotient of $R_1$. This means $1 + r^2 + r^4 = 0$ in $R$ or, equivalently, $r^2 + r^4 = 1$. Now consider the element $a = s + sr$. Direct calculation shows that $a^4 = 1$ in $R$. But since $\dih_{12}$ has no elements of order 4, we must have $a^2 = 1$. Now, $1 = a^2 = r + r^5$ and hence $r^3 = r^4 + r^2 = 1$ in $R$, a contradiction.
\end{proof}

\begin{prop} The dihedral group $\dih_{24}$ is not realizable in characteristic $2$.
\label{char2d24}
\end{prop}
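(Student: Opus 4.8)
The plan is to assume $\dih_{24}$ is realizable in characteristic $2$ and derive a contradiction, combining the Jacobson radical bookkeeping of Proposition \ref{junits}, the semiprimitive classification in Proposition \ref{char2j0}, the idempotent-lifting criterion of Proposition \ref{j2indecomp}, and the decomposability result just established (Proposition \ref{d12decomp}). First I would reduce to a finite, indecomposable ring. By Proposition \ref{reducefinite} we may take $R$ finite of characteristic $2$, generated by its units, with $R^\times \cong \dih_{24}$. Decomposing $R$ into a finite product of indecomposable rings and using that $\dih_{24}$ is indecomposable (Proposition \ref{dih-prop}(\ref{dih-prop-decomp}), since $12$ is not twice an odd number), exactly one factor $R_0$ has $R_0^\times \cong \dih_{24}$ and the others have trivial unit group. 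That factor $R_0$ is the image of $R$ under a projection, hence is finite, of characteristic $2$, and generated by its units (a projection carries the units of $R$ onto the units of $R_0$), and it is indecomposable. Replacing $R$ by $R_0$, I may assume $R$ is indecomposable.

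Next I would pin down $J = J(R)$. As $R$ is finite, $|J|$ is a power of $2$ dividing $|R^\times| = 24$, and $1 + J$ is a normal subgroup of $\dih_{24}$ of order $|J|$ by Proposition \ref{junits}. By Proposition \ref{dih-prop} the only normal $2$-subgroups of $\dih_{24}$ are $1$, $\langle r^6 \rangle \cong \cyc_2$, and $\langle r^3 \rangle \cong \cyc_4$, so $|J| \in \{1, 2, 4\}$. If $|J| = 1$, then $R$ has trivial Jacobson radical, so since $8 \nmid 12$ Proposition \ref{char2j0} forces $12 = 3$. If $|J| = 2$, then $1 + J = \langle r^6 \rangle$, so $R/J$ is a finite characteristic $2$ ring with trivial Jacobson radical and $(R/J)^\times \cong \dih_{24}/\langle r^6 \rangle \cong \dih_{12}$; since $8 \nmid 6$, Proposition \ref{char2j0} forces $6 = 3$. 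Both are contradictions.

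The crux is the case $|J| = 4$. Here $1 + J = \langle r^3 \rangle \cong \cyc_4$, which is not elementary abelian, so $J^2 \neq 0$ --- otherwise $1 + a \mapsto a$ would identify $1 + J$ with the exponent-$2$ additive group $(J, +)$. Since $J^2 \subsetneq J$ by nilpotence and $|J| = 4$, this gives $|J^2| = 2$, so $1 + J^2 \cong \cyc_2$ equals the normal subgroup $\langle r^6 \rangle$ and $(R/J^2)^\times \cong \dih_{24}/\langle r^6 \rangle \cong \dih_{12}$. Now $R/J^2$ has characteristic $2$ and is generated by its units, so Proposition \ref{d12decomp} makes it decomposable; but $J$ is a nilpotent two-sided ideal contained in the Jacobson radical, so by Proposition \ref{j2indecomp} the ring $R$ is indecomposable if and only if $R/J^2$ is, contradicting the reduction to indecomposable $R$. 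Having exhausted the cases, $\dih_{24}$ is not realizable in characteristic $2$.

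I expect the $|J| = 4$ case to be the main obstacle: identifying $1 + J$ as $\cyc_4$ rather than $\cyc_2 \times \cyc_2$, deducing $|J^2| = 2$, and then invoking Propositions \ref{j2indecomp} and \ref{d12decomp} in tandem. The other cases are quick consequences of Proposition \ref{char2j0}, and in the opening reduction one need only be mildly careful that discarding the trivial-unit factors leaves a ring still generated by its units.
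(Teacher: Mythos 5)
Your proof is correct and follows essentially the same route as the paper: reduce to a finite indecomposable ring generated by its units, bound $|J|$ by the normal $2$-subgroups of $\dih_{24}$, dispose of $|J|=1,2$ via Proposition~\ref{char2j0}, and in the case $|J|=4$ pass to $R/J^2$ and combine Propositions~\ref{j2indecomp} and~\ref{d12decomp}. Your explicit verification that $J^2\neq 0$ (since $1+J\cong\cyc_4$ is not elementary abelian) and hence $1+J^2=\langle r^6\rangle$ is a nice filling-in of a step the paper leaves implicit.
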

\begin{proof}
If $\dih_{24}$ is realizable in characteristic 2, then by Proposition \ref{reducefinite} there exists a finite ring $R$ of characteristic 2, generated by its units, such that $R^\times = \dih_{24}$. We may further assume $R$ is indecomposable: if $R \cong R_1 \times R_2$, then since $\dih_{24}$ indecomposable, without loss of generality $R_1^\times = \dih_{24}$ and we may replace $R$ with $R_1$ (which is also generated by its units). Since $R$ is finite, we may repeat this step until we obtain an indecomposable ring. 

Let $J$ denote the Jacobson radical of $R$. Since $24 = 2 \cdot 4\cdot 3$, as in the proof of Proposition \ref{char2main} we must have $|J| = 1, 2,$ or 4 (since $\dih_{24}$ has no normal subgroup of order 8, $|J| \neq 8$). We cannot have $|J| = 1$ by Proposition \ref{char2j0}. If $|J| = 2$, then $1 + J = \langle r^6 \rangle$ and $R/J$ is has trivial Jacobson radical and unit group $\dih_{12}$, which is impossible by Proposition \ref{char2j0}.

It remains to consider the possibility that $|J| = 4$. In this case, $1 + J = \langle r^3 \rangle$. Since $J$ is nilpotent, by Proposition \ref{j2indecomp} we obtain that since $R$ is indecomposable, so is $R/J^2$.  Since $1 + J^2 = \langle r^6 \rangle$ and the map $R \longrightarrow R/J^2$ is surjective on units, $R/J^2$ is an indecomposable ring generated by its units with unit group $\dih_{12}$. This is impossible by Proposition \ref{d12decomp}.
\end{proof}

We now turn our attention to rings of characteristic 3.

\begin{prop} If $n \neq 1, 2$ or $6$, then $\dih_{2n}$ is not realizable in characteristic $3$. \label{char3}
\end{prop}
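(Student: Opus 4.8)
The plan is to apply Proposition~\ref{reducefinite} to replace $R$ by a finite ring of characteristic $3$ that is generated by its units and has $R^\times \cong \dih_{2n}$, and then to show $n \in \{1,2,6\}$. Proposition~\ref{chars} already rules out odd $n > 1$ in characteristic $3$, and the groups $\dih_2$ and $\dih_4$ do occur there (as $\F_3^\times$ and $(\F_3 \times \F_3)^\times$), so the real task is to show that an even $n > 2$ must equal $6$. My first step is to bound the $3$-part of $n$: if $9 \mid n$, then the rotation $r^{n/9} \in R^\times$ has order $3^2$, so Proposition~\ref{rankprop}(\ref{rankpropodd}) with $p = 3$ forces $R^\times$ to contain a \emph{noncyclic} finite abelian subgroup $G$ with $\trank(G) \geq 3$. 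But the only noncyclic abelian subgroup of any dihedral group is $\cyc_2 \times \cyc_2$, which has $\trank 2$; this contradiction shows $9 \nmid n$.

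The second step is to control the Jacobson radical $J$ of $R$. Since $R$ is finite of characteristic $3$, both $|R|$ and---by Proposition~\ref{junits}---$|J|$ are powers of $3$, while $|J|$ divides $|R^\times| = 2n$; because $9 \nmid n$, only $|J| = 1$ and $|J| = 3$ remain. Suppose first that $J = 0$. Then Proposition~\ref{jzero} writes $R \cong \prod_i \mathbf{M}_{m_i}(\F_{3^{k_i}})$, so the exact power of $3$ dividing $|R^\times| = \prod_i |\GL_{m_i}(\F_{3^{k_i}})|$ is $3^{\sum_i k_i m_i(m_i-1)/2}$; comparing with the $3$-part of $2n$, which is at most $3$, forces $\sum_i k_i m_i(m_i-1)/2 \leq 1$, so every factor is a field $\F_{3^{k_i}}$ except for at most one copy of $\mathbf{M}_2(\F_3)$. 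If no matrix factor appears, $R$ is commutative, $\dih_{2n}$ is abelian, and $n \leq 2$. If a copy of $\mathbf{M}_2(\F_3)$ does appear, then $48 = |\GL_2(\F_3)|$ divides $2n$, so $8 \mid n$ and $\dih_{2n}$ is indecomposable by Proposition~\ref{dih-prop}(\ref{dih-prop-decomp}); as any surviving field factor would make the unit group decomposable, this forces $R = \mathbf{M}_2(\F_3)$ and hence $\dih_{48} \cong \GL_2(\F_3)$---which a short computation excludes (for instance the largest cyclic subgroup of $\GL_2(\F_3)$ has order $8$, whereas $\dih_{48}$ has one of order $24$). So $J = 0$ again gives $n \leq 2$.

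It then remains to treat $|J| = 3$. Here $1 + J$ is a normal subgroup of $\dih_{2n}$ of order $3$, and because the only elements of odd order in a dihedral group are rotations, $1 + J = \langle r^{n/3}\rangle$, so $3 \mid n$. By Proposition~\ref{junits}, $R/J$ is a finite ring of characteristic $3$ with trivial Jacobson radical and unit group $\dih_{2n}/\langle r^{n/3}\rangle \cong \dih_{2n/3}$; applying the analysis of the previous step to $R/J$ shows $\dih_{2n/3}$ is abelian, so $n/3 \leq 2$, and since $n$ is even with $3 \mid n$ this forces $n = 6$. Combining the two cases, $n \in \{1,2,6\}$, which is the assertion.

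The step I expect to present the main obstacle is the semiprimitive case: one must extract just enough information from the Artin--Wedderburn decomposition, the crux being the elementary but essential verification that $\GL_2(\F_3)$ is not a direct factor of any dihedral group. The remaining pieces---the radical bound, the normal-subgroup structure of $\dih_{2n}$, and the reductions---are routine.
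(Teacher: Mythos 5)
Your proof is correct and follows the same overall strategy as the paper's: rule out $9 \mid n$ via Proposition \ref{rankprop}, bound $|J| \in \{1,3\}$, apply Artin--Wedderburn when $J = 0$, and pass to $R/J$ when $|J| = 3$. The only divergences are in two terminal subcases, and both of your shortcuts are valid: you exclude $\GL_2(\F_3) \cong \dih_{48}$ by comparing maximal cyclic subgroup orders ($8$ versus $24$) where the paper counts subgroups of order $3$, and you dispatch the residual case $n = 3$ by citing Proposition \ref{chars} (odd $n > 1$ forces characteristic $2$) where the paper instead runs a subring argument with $\F_3[x]/(x^3)$ to show $\dih_6$ would need an element of order $6$.
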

\begin{proof}
Suppose $R$ is a ring of characteristic 3 with $R^\times = \dih_{2n}$. If $9 |\, n$, then $R$ contains a unit of order $3^2$. By Proposition \ref{rankprop} (\ref{rankpropodd}), $\dih_{2n}$ must contain a noncyclic abelian subgroup, but no such subgroup exists for $n \neq 2$. Hence, $9 \nmid n$.

Since we may assume $R$ is finite, the Jacobson radical of $R$ has order 1 or 3 since $9 \nmid n$. If $|J| = 1$, then $R$ is a product of matrix rings of the form $\mathbf{M}_m(\F_{3^k})$. Suppose $\dih_{2n} = \GL_m(\F_{3^k})$. Using the formula for $|\GL_m(\F_{3^k})|$ discussed in the proof of Proposition \ref{char2j0}, we see that the exponent of 3 in $|\GL_m(\F_{3^k})|$ is $km(m-1)/2$. Since $9 \nmid n$, we have $km(m-1) \leq 2$.  If $m = 1$, then the dihedral group $\dih_{2n} = \F_{3^k}^\times$ is cyclic, forcing $k = 1$ and $n = 1$.  If $m = 2$, then $k = 1$; however, $\GL_2(\F_3)$ has more than one subgroup of order 3 (take, for example, upper triangular matrices with $1$'s on the diagonal and lower triangular matrices with $1$'s on the diagonal), whereas $\dih_{48}$ has a unique subgroup of order 3 generated by $r^8$. To summarize, if $\dih_{2n}$ is indecomposable and realizable in characteristic 3 by a ring with trivial Jacobson radical, then $\dih_{2n} = \dih_2$.  If $\dih_{2n}$ is decomposable, then $n = 2n'$, where $n'$ is odd, $\dih_{2n} \cong \dih_2 \times \dih_{2n'}$, and $\dih_{2n'}$ is indecomposable. So if $\dih_{2n}$ is decomposable and realizable in characteristic 3 by a ring with trivial Jacobson radical, then $\dih_{2n} = \dih_2 \times \dih_2 = \dih_4$.

Now suppose $|J| = 3$. Then $(R/J)^\times \cong \dih_{2n/3}$ where $R/J$ has trivial Jacobson radical. Hence, if $\dih_{2n}$ is realizable, then $n = 3$ or $n = 6$. To finish the proof, we must eliminate $\dih_6$. If $R^\times \cong \dih_6$, then $R$ has a subring isomorphic to a quotient of $\F_3[x]/(x^3 - 1) \cong \F_3[x]/(x^3)$, where $x$ corresponds to $r$. By Proposition \ref{unitprop}, the unit group of this subring is $\cyc_2 \times B$, where $B$ is a finite abelian 3-group, necessarily nontrivial since the subring must contain an element of order 3. The group $R^\times$ must therefore contain an element of order 6, but $\dih_6$ has no such element. This completes the proof.
\end{proof}

Together with the classification in characteristics 2 and 3, the following proposition completes the classification in characteristic 6. Condition (\ref{char6-1}) gives rise to all the dihedral groups in characteristic 6 in Theorem \ref{main}; conditions (\ref{char6-2}) and (\ref{char6-3}) generate no other examples (though the rings therein are not generally isomorphic to the rings appearing in condition (\ref{char6-1})).

\begin{prop}  If $\dih_{2n}$ is the group of units of a ring $R$ of characteristic $6$ then either
\begin{enumerate}
\item $\dih_{2n} = (\F_2 \times S)^\times$, where $\ch S = 3$;\label{char6-1}
\item $2n = 4v$, where $v$ is odd, and $\dih_{2n} = (S \times \F_3)^\times$, where $\ch S = 2$; or\label{char6-2}
\item $2n = 4v$, where $v$ is odd, and $\dih_{2n} = (\F_2[x]/(x^2) \times S)^\times$ where $\ch S = 3$.\label{char6-3}
\end{enumerate}
\label{char6}
\end{prop}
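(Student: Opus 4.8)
The plan is to exploit the ring-theoretic splitting forced by $\ch R = 6$. Since $\ch R = 6$, the image of $\Z \to R$ is a central subring isomorphic to $\Z_6 \cong \Z_2 \times \Z_3$, so the element $e = 3 \in \Z_6 \subseteq R$ is a central idempotent ($3^2 = 9 = 3$ in $\Z_6$), and $R \cong A \times B$ with $A = eR$ and $B = (1-e)R$. The additive order of $e$ in $R$ is $2$ (since $2 \cdot 3 = 0$ but $3 \neq 0$) and that of $1 - e$ is $3$, so $\ch A = 2$ and $\ch B = 3$; both factors are nonzero, since $e = 0$ or $e = 1$ would force $3 = 0$ or $2 = 0$ in $R$. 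Hence $\dih_{2n} = R^\times = A^\times \times B^\times$. The one structural fact about $B$ that I need is that $B^\times \neq \langle 1 \rangle$: the central subring $\Z_3 \subseteq B$ contains $-1 \neq 1$ (otherwise $\ch B$ would divide $2$), so $-1$ has order $2$ in $B^\times$.

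Next I would split on whether $A^\times$ is trivial. If $A^\times = \langle 1 \rangle$, then $\dih_{2n} = B^\times = (\F_2 \times B)^\times$ with $\ch B = 3$, which is condition~(\ref{char6-1}) with $S = B$. If $A^\times \neq \langle 1 \rangle$, then $\dih_{2n} = A^\times \times B^\times$ is a product of two nontrivial groups, hence decomposable, so by Proposition~\ref{dih-prop}~(\ref{dih-prop-decomp}) we have $2n = 4v$ with $v$ odd and $\dih_{2n} \cong \cyc_2 \times \dih_{2v}$, with both displayed factors indecomposable. Since $R^\times$ is finite, so are $A^\times$ and $B^\times$, and the Krull--Schmidt theorem applies: $\{A^\times, B^\times\}$ must be, up to order, $\{\cyc_2, \dih_{2v}\}$ (these two groups coincide when $v = 1$). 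If $B^\times \cong \cyc_2$, then since $\F_3^\times \cong \cyc_2$ we obtain $\dih_{2n} \cong A^\times \times \F_3^\times = (A \times \F_3)^\times$ with $\ch A = 2$, which is condition~(\ref{char6-2}) with $S = A$. Otherwise $A^\times \cong \cyc_2$ and $B^\times \cong \dih_{2v}$, and since $(\F_2[x]/(x^2))^\times \cong \cyc_2$ we obtain $\dih_{2n} \cong (\F_2[x]/(x^2))^\times \times B^\times = (\F_2[x]/(x^2) \times B)^\times$ with $\ch B = 3$, which is condition~(\ref{char6-3}) with $S = B$.

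I do not anticipate a substantive obstacle: once the idempotent splitting is in place the argument is bookkeeping, with the characteristics of $A$ and $B$ (which are $2$ and $3$) dictating which of the three conditions is produced. The two points needing care are (i) one should resist trying to identify $A$ and $B$ themselves — they need not be $\F_2$, $\F_3$, or $\F_2[x]/(x^2)$ — and should instead verify each condition by substituting the standard small ring carrying the relevant unit group; and (ii) the degenerate case $v = 1$, where $\dih_{2v} = \cyc_2$ makes the assignment of the two factors to $A^\times$ and $B^\times$ ambiguous, but then condition~(\ref{char6-2}) applies with $v = 1$, so nothing goes wrong.
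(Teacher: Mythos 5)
Your proposal is correct and follows essentially the same route as the paper: split $R \cong A \times B$ via the central idempotent $3 \in \Z_6 \subseteq R$ into factors of characteristics $2$ and $3$, observe that the characteristic-$3$ factor has nontrivial unit group (because $-1 \neq 1$ there), and then use the uniqueness of the decomposition $\dih_{4v} \cong \cyc_2 \times \dih_{2v}$ to sort the two unit groups into the three listed cases, substituting $\F_2$, $\F_3$, or $\F_2[x]/(x^2)$ for whichever factor contributes only a trivial or $\cyc_2$ unit group. The only differences are cosmetic: you spell out the idempotent and the Krull--Schmidt step that the paper leaves implicit via Proposition~\ref{dih-prop}~(\ref{dih-prop-decomp}).
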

\begin{proof}
Suppose $R$ is a ring of characteristic 6 with unit group $\dih_{2n}$.  Then $R \cong R_1 \times R_2$, where $R_1$ has characteristic 2, $R_2$ has characteristic 3, and $\dih_{2n} = R_1^\times \times R_2^\times$.  The group $R_2^\times$ cannot be trivial, so one possibility is that $R_2^\times = \dih_{2n}$ and condition (\ref{char6-1}) holds with $S = R_2$. Otherwise, $\dih_{2n}$ must be decomposable. In this case, $n = 2v$, where $v$ is odd, and $\dih_{2v}$ is indecomposable with $\dih_{2v} = R_1^\times$ or $\dih_{2v} = R_2^\times$. In the former case, $\dih_{2n} = (R_1 \times \F_3)^\times$ and condition (\ref{char6-2}) holds, and in the latter case $\dih_{2n} = (\F_2[x]/(x^2) \times R_2)^\times$ and condition (\ref{char6-3}) holds.
\end{proof}

To complete the proof of Theorem \ref{main} for rings of positive characteristic, only the characteristic 4 case remains.

\begin{prop} If $\dih_{2n}$ is realizable in characteristic $4$, then $n = 1, 2, 4, 6$ or $8$.\label{char4main}
\end{prop}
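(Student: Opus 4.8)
The plan is to follow the template of the earlier positive-characteristic results, the new ingredient being reduction modulo $2$. By Proposition~\ref{reducefinite} I may assume $R$ is finite of characteristic $4$ and generated by its units, with $R^\times = \dih_{2n}$. Since $2\cdot 1_R$ is central with $(2\cdot 1_R)^2 = 0$, the ideal $2R$ is nilpotent, hence lies in $J(R)$, so by Proposition~\ref{junits} the map $R \to \bar R := R/2R$ is surjective on units; here $\bar R$ has characteristic $2$, and the kernel $1 + 2R$ of the induced map on units is a normal subgroup of $\dih_{2n}$ isomorphic, via $a \mapsto 1+a$, to $(2R, +)$. Because $2\cdot(2R) = 0$ this group is elementary abelian of $2$-power order, and it is nontrivial since $2\cdot 1_R \neq 0$.

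First I would dispose of the easy cases: if $n > 1$ is odd then Proposition~\ref{chars} already forces characteristic $2$, and for $n \le 4$ there is nothing to prove; so assume $n > 4$, hence $n$ even. For such $n$, Proposition~\ref{dihedralprop}(4) shows the only normal elementary abelian $2$-subgroups of $\dih_{2n}$ are $\langle 1 \rangle$ and the center $\langle r^{n/2} \rangle$ (the subgroups $\langle r^2, s \rangle$ and $\langle r^2, rs \rangle$ are non-abelian, and a subgroup of $\langle r \rangle$ is elementary abelian only if its order is at most $2$). Hence $1 + 2R = \langle r^{n/2} \rangle$, so $2R = \{0, 2\cdot 1_R\}$; $\bar R^\times \cong \dih_{2n}/\langle r^{n/2} \rangle$ is a dihedral group of order $n$; and since $1 + 2\cdot 1_R$ is the nontrivial element of $1 + 2R$ we get $r^{n/2} = 1 + 2\cdot 1_R = -1_R$ in $R$. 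I also record that every unit $u$ satisfies $2u = 2\cdot 1_R$, since $2u = 0$ would give $2\cdot 1_R = (2u)u^{-1} = 0$. Now the characteristic-$2$ classification applies to $\bar R$: by Propositions~\ref{char2main} and~\ref{char2d24} a dihedral group of order $n$ is realizable in characteristic $2$ only when $n/2 \mid 12$ and $n/2 \neq 12$, so with $n > 4$ we are reduced to $n \in \{6, 8, 12\}$ (the case $n = 8$ being eliminated by a separate argument afterward).

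It remains to rule out $n = 12$. Suppose $R^\times = \dih_{24}$; then $r$ has order $12$, $r^6 = -1_R$, and $2u = 2\cdot 1_R$ for every unit $u$. Put $e_0 = r^2 + r^4$. Using $2r^k = 2\cdot 1_R$ (each $r^k$ is a unit) and $r^6 = -1_R$, one computes $2e_0 = 0$ and $e_0^2 = r^4 + 2r^6 + r^8 = r^4 + 2r^2 - r^2 = r^2 + r^4 = e_0$, using $r^8 = r^6 r^2 = -r^2$ and $2r^6 = 2\cdot 1_R = 2r^2$; thus $e_0$ is idempotent. It commutes with $r$, and $se_0 = (r^{-2} + r^{-4})s = -e_0 s = e_0 s$ (the last equality because $2e_0 = 0$), so, as $R = \Z_4[r,s]$, $e_0$ is central. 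Moreover $e_0 \neq 0$ (else $r^2(1+r^2) = 0$ forces $r^2 = -1_R$, impossible since $r^2$ has order $6$) and $e_0 \neq 1_R$ (else $2\cdot 1_R = 2e_0 = 0$), so $R \cong e_0 R \times (1-e_0)R$ nontrivially. Since $2e_0 = 0$, the factor $e_0 R$ has characteristic $2$, so its unit group is not $\dih_{24}$ by Proposition~\ref{char2d24}; as $\dih_{24}$ is indecomposable this forces $(e_0 R)^\times$ to be trivial. But then the unit $r^2$ projects to the identity of $e_0 R$, i.e.\ $e_0 r^2 = e_0$, whereas $e_0 r^2 = r^4 + r^6 = r^4 - 1_R$; hence $r^2 + r^4 = r^4 - 1_R$, i.e.\ $r^2 = -1_R$, a contradiction. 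Therefore $n \in \{1, 2, 4, 6, 8\}$.

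I expect the last step to be the main obstacle. Reduction modulo $2$ only yields $\bar R^\times \cong \dih_{12}$, which is realizable in characteristic $2$, so no contradiction is available for free; the extra leverage is the characteristic-$4$ identity $2u = 2\cdot 1_R$ for all units, which both forces $r^6 = -1_R$ and promotes $r^2 + r^4$ to a genuine central idempotent of $R$ itself rather than merely of $\bar R$. Splitting along it peels off a characteristic-$2$ direct factor whose unit group would have to be all of $\dih_{24}$, contradicting the characteristic-$2$ classification already established.
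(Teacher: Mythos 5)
Your proof is correct. The first half --- reducing mod the nilpotent ideal $2R$, identifying $1+2R$ with the unique nontrivial normal elementary abelian $2$-subgroup $\langle r^{n/2}\rangle$ for even $n>4$, and then invoking Propositions \ref{char2main} and \ref{char2d24} on $\bar R^\times\cong\dih_n$ --- is essentially the paper's argument. Where you genuinely diverge is in eliminating $n=12$. The paper first passes to a finite indecomposable $R$ generated by its units, pins down $|J|=4$ and $J^2=(2)$ via Proposition \ref{junits}, and then plays Proposition \ref{j2indecomp} against Proposition \ref{d12decomp} (which exhibits the central idempotent $r^2+r^4$ in $\F_2[\dih_{12}]$). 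You instead lift that same idempotent directly into the characteristic-$4$ ring: your observation that $2u=2\cdot 1_R$ for every unit $u$ (a consequence of $2R=\{0,2\}$) is exactly what makes $e_0=r^2+r^4$ idempotent and central in $R$ itself, and the resulting splitting $R\cong e_0R\times(1-e_0)R$ with $e_0R$ of characteristic $2$ yields a contradiction from indecomposability of $\dih_{24}$, Proposition \ref{char2d24}, and the computation $e_0r^2=r^4-1\ne e_0$. I checked the computations ($2e_0=0$, $e_0^2=r^4+2r^6+r^8=e_0$, $se_0=e_0s$, $e_0\ne 0,1$) and they are right. Your route dispenses with the Jacobson-radical bookkeeping, Proposition \ref{j2indecomp}, and a separate appeal to Proposition \ref{d12decomp}, at the cost of a slightly longer hands-on verification; the paper's route is more structural and reuses \ref{d12decomp}, which it needs anyway for Proposition \ref{char2d24}.
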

\begin{proof}
Suppose $R$ is a ring of characteristic 4 with $R^\times = \dih_{2n}$.  For any $t \in R$, we have $(1 + 2t)^2 = 1$.  This implies that $1 + 2t$ is an element of order dividing 2 in $\dih_{2n}$.  Thus, $$1 + (2) \subseteq \{g \in \dih_{2n}\, | \, |g| \leq 2\}.$$ Consider the quotient map $\map{\varphi}{R}{R/(2)}$. Note that $\map{\varphi^\times}{R^\times}{(R/(2))^\times}$ is surjective since $(2)$ is a nilpotent ideal and $$1 + (2) = \ker \varphi^\times.$$ This kernel cannot be trivial for otherwise $2 = 0$ in $R$ and $R$ has characteristic 2, not 4. Hence, $\ker \varphi^\times$ must be a nontrivial normal subgroup of $\dih_{2n}$ consisting entirely of elements of order dividing 2. If $n \neq 1, 2, 4$, then we must have $n$ even and $\ker \varphi^\times = \langle r^{n/2}\rangle$. Consequently, $(R/(2))^\times \cong \dih_n$.  This forces $n$ to be 8 or an even divisor of 12 by Propositions \ref{char2main} and \ref{char2d24}.

In the case $n =12$, we may assume (as in the proof of Proposition \ref{char2d24}) that $R$ in the above argument is finite, indecomposable, and generated by its units. Note that the Jacobson radical $J$ of $R$ contains the nilpotent ideal $(2)$. Now the ring $R/(2)$ is a ring of characteristic 2 whose unit group is isomorphic to $\dih_{12}$. By the proof of Proposition \ref{char2main}, the Jacobson radical of $R/(2)$ must have size 2. Hence, $|J| = 4$. Since $1 + J$ is a normal subgroup of $\dih_{24}$ of order 4, we must have $1 + J = \langle r^3 \rangle$ and $1 + J^2 = \langle r^6 \rangle$, so $J^2 = (2)$. Since $R$ is indecomposable, $R/J^2 = R/(2)$ is indecomposable by Proposition \ref{j2indecomp}. This contradicts Proposition \ref{d12decomp}.
\end{proof}

Finally, we eliminate $\dih_{16}$ in characteristic 4.

\begin{prop} 
The dihedral group $\dih_{16}$ is not realizable in characteristic $4$. \label{char4d16}
\end{prop}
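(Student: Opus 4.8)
The plan is to descend to the commutative subring generated by the rotation $r$ of order $8$ and show that it would have to be a local ring of order $16$ with cyclic unit group $\cyc_8$ --- which does not exist. So suppose $R$ has characteristic $4$ with $R^\times \cong \dih_{16}$, and identify $R^\times$ with $\dih_{16} = \langle r, s\rangle$. Since $\ch R = 4$, we have $-1 \neq 1$ in $R$; the element $-1$ is central in $R$ and satisfies $(-1)^2 = 1$, so it is a central involution of $\dih_{16}$. As the center of $\dih_{16}$ is $\langle r^4\rangle$ by Proposition~\ref{dih-prop}, we conclude $r^4 = -1$ in $R$. Therefore the evaluation homomorphism $\Z_4[x]\to R$ sending $x$ to $r$ annihilates $x^4+1$, so the subring $T := \Z_4[r]\subseteq R$ is a finite quotient of $\Z_4[x]/(x^4+1)$. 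Reducing $\Z_4[x]/(x^4+1)$ modulo $2$ gives $\F_2[x]/\big((x+1)^4\big)$, which is local with residue field $\F_2$; since $(2)$ is nilpotent, $\Z_4[x]/(x^4+1)$ is itself local with residue field $\F_2$, and hence so is its quotient $T$.

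Let $\mathfrak m$ denote the maximal ideal of $T$, so that $T^\times = 1 + \mathfrak m$. The element $r$ is a unit of $T$ (with inverse $r^7 \in T$), and $T^\times$ is abelian and contained in $R^\times = \dih_{16}$; thus $T^\times$ is an abelian subgroup of $\dih_{16}$ containing $\langle r\rangle$. Since $\langle r\rangle$ has index $2$ in $\dih_{16}$, the only subgroups of $\dih_{16}$ containing it are $\langle r\rangle$ and $\dih_{16}$, and the latter is nonabelian, so $T^\times = \langle r\rangle \cong \cyc_8$. In particular $|\mathfrak m| = |T^\times| = 8$ and $|T| = 16$.

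It remains to obtain a contradiction. For $u \in \mathfrak m$, the binomial theorem together with $\ch T = 4$ gives $(1+u)^4 = 1 + 2u^2 + u^4$. Because $\mathfrak m$ is nilpotent, the chain $\mathfrak m \supseteq \mathfrak m^2 \supseteq \mathfrak m^3 \supseteq \cdots$ is strictly decreasing until it reaches $0$; as $|\mathfrak m| = 2^3$, this forces $\mathfrak m^4 = 0$, so $u^4 = 0$. Also $2 \in \mathfrak m$, hence $2u^2 \in \mathfrak m^3$. If $\mathfrak m^3 = 0$ this term vanishes; otherwise the chain $\mathfrak m \supsetneq \mathfrak m^2 \supsetneq \mathfrak m^3 \supsetneq 0$ has all successive quotients of order $2$, so $\dim_{\F_2}(\mathfrak m/\mathfrak m^2) = 1$ and $\mathfrak m$ is principal; then $2 \in \mathfrak m^2$ (otherwise $\mathfrak m = (2)$ and $\mathfrak m^2 = (4) = 0$), so $2u^2 \in \mathfrak m^2\mathfrak m^2 = \mathfrak m^4 = 0$. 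Either way $(1+u)^4 = 1$, so $T^\times$ has exponent dividing $4$, contradicting $T^\times \cong \cyc_8$. Hence $\dih_{16}$ is not realizable in characteristic $4$.

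The step I expect to be the main obstacle is the last one. It is not true that an arbitrary finite local ring of characteristic $4$ with residue field $\F_2$ has unit group of exponent dividing $4$ --- for instance, $1 + x$ has order $8$ in $\Z_4[x]/(x^8 - 2x^4)$ --- so the contradiction genuinely rests on the size bound $|T| = 16$, equivalently on $\mathfrak m^4 = 0$, and the argument has to be arranged so that this small-length hypothesis does the work in each of the possible cases for $\dim_{\F_2}(\mathfrak m/\mathfrak m^2)$.
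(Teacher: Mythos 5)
Your proof is correct, and while it shares the paper's setup, the way it reaches a contradiction is genuinely different. Both arguments begin the same way: the central involution $-1$ forces $r^4 = -1$, so the commutative subring $T = \Z_4[r]$ is a quotient of $\Z_4[x]/(x^4+1)$, and since $\langle r\rangle$ is the only abelian subgroup of $\dih_{16}$ containing $\langle r\rangle$, one gets $T^\times \cong \cyc_8$. The paper then finishes with an explicit element chase: since $\pm 1$ are the only units of $T$ of order dividing $2$, the identities $(1+2r)^2 = 1$ and $(1+(1+r)^3)^2 = 1$ force $2 = 0$, contradicting $\ch R = 4$. You instead exploit the finite local structure of $T$: the residue field is $\F_2$, so $|T^\times| = |\mathfrak{m}| = 8$ and $|T| = 16$, whence $\mathfrak{m}^4 = 0$; your case analysis on whether $\mathfrak{m}^3$ vanishes (using Nakayama to see that $\mathfrak{m}$ is principal and $2 \in \mathfrak{m}^2$ in the longer-filtration case) correctly kills the term $2u^2 \in \mathfrak{m}^3$, giving $(1+u)^4 = 1 + 2u^2 + u^4 = 1$ for all $u \in \mathfrak{m}$ and hence exponent dividing $4$ for $T^\times$ --- contradicting $\cyc_8$. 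Your route is more systematic (the paper's choice of the witness $1+(1+r)^3$ is rather ad hoc) at the cost of invoking Nakayama and the locality of $\Z_4[x]/(x^4+1)$; your closing remark is also well taken, since the exponent bound genuinely depends on $|T| = 16$ rather than on locality alone, and your argument correctly isolates that dependence.
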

\begin{proof}
Assume to the contrary that there is a ring $R$ of characteristic 4 with $R^\times = \dih_{16}$. As observed earlier, $-1$ is a central unit of order 2 so $r^4 = -1$ in $R$. We therefore have a ring homomorphism $$\map{\varphi}{\Z_4[x]/(x^4 + 1)}{R}$$ sending $x$ to $r$. Let $S$ denote the image of $\varphi$. We claim that $2 = 0$ in $S$ forcing the characteristic of $R$ to be $2$, a contradiction.

Since $S$ is a commutative subring of $R$ containing $\langle r \rangle = \cyc_8$, we must have $S^\times = \cyc_8$ since this is the only abelian subgroup of $\dih_{16}$ containing $\cyc_8$. Hence, $1$ and $-1$ are the only units of order dividing 2 in $S$. Since $(1 + 2r)^2 = 1$, we must either have $1 + 2r = 1$, in which case $2 = 0$ since $r$ is a unit, or $1 + 2r = -1$, in which case $2r - 2 = 0$.

Now, using the fact that $r^4 +1 = 0$ and $2r - 2 = 0$, we have $(1 + (1+r)^3)^2 = 1$. Thus, either $1 + (1 + r)^3 = 1$, in which case $0 = (1+ r)^4 + 2 = 2$, or $1 + (1+r)^3 = -1$, in which case $0 = (1 + r)^4 + 2 = 2(1+r) + 2 = 2$.
\end{proof}

\section{Rings of characteristic zero}\label{sec:char0}

Suppose $n > 1$ and $\dih_{2n}$ is realizable in characteristic 0. By Proposition \ref{chars}, $n$ must be even. We will begin by proving that $\dih_{4k}$ is realizable when $k \geq 1$ is odd. Let $s$ denote the generator of $\cyc_2 = \langle s \rangle$. The group ring $\Z[\cyc_2] \cong \Z[s]/(s^2 -1)$ has unit group $\{1, -1, s, -s\} \cong \dih_2 \times \dih_2 \cong \dih_4$. We now define a ring $\gk{k}$ that contains $\Z[\cyc_2]$ as a subring and has unit group isomorphic to $\cyc_2 \times \dih_{2k}$. Define maps $$\map{S, D}{\Z[\cyc_2]}{\Z_k}$$ by $S(a + bs) = a + b \text{ mod } k$ and $D(a + bs) = a - b \text{ mod } k$. These maps are ring homomorphisms since they are evaluation at $s = 1$ and $s = -1$, respectively, followed by reduction modulo $k$. As an additive abelian group, let $$\gk{k} = \Z_k \oplus \Z[\cyc_2],$$ and define a product on $\gk{k}$ by $$(t, u)(t', v) = (tD(v) + t'S(u), uv).$$ It is straightforward to check that this binary operation has identity $1 = (0, 1)$ and is both associative and distributive.

\begin{prop} For any positive integer $k$, the ring $\gk{k}$ has unit group isomorphic to $\cyc_2 \times \dih_{2k}$. In particular, if $k$ is odd then the dihedral group $\dih_{4k}$ is realizable in characteristic $0$. \label{gamma-4k}
\end{prop}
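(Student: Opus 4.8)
The plan is to compute $\gk{k}^\times$ by hand and then recognize it. Throughout, I would write an element of $\gk{k}$ as a pair $(t,u)$ with $t\in\Z_k$ and $u\in\Z[\cyc_2]$, and keep in mind the product rule $(t,u)(t',v)=(tD(v)+t'S(u),\,uv)$. (Structurally, $\gk{k}$ is the trivial extension of $\Z[\cyc_2]$ by the bimodule $\Z_k$ whose left and right actions are given by $S$ and $D$, and $N=\Z_k\oplus 0$ is a square-zero ideal; but the direct computation below is quickest.)

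\emph{Step 1: the units.} From the second coordinate of a product, $(t,u)$ can be a unit only if $u\in\Z[\cyc_2]^\times=\{1,-1,s,-s\}$. For the converse, given such a $u$, put $v=u^{-1}$; then the requirements $(t,u)(t',v)=(0,1)$ and $(t',v)(t,u)=(0,1)$ reduce (in first coordinates) to the two equations $tD(v)+t'S(u)=0$ and $tS(v)+t'D(u)=0$ in $\Z_k$. The key observation is that $S$ and $D$ send $\{1,-1,s,-s\}$ into $\Z_k^\times=\{1,-1\}$ and, being ring homomorphisms, satisfy $S(u)S(v)=S(uv)=1$ and $D(u)D(v)=D(uv)=1$; hence a single value $t'=-tD(v)S(u)$ solves both equations simultaneously. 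Thus $\gk{k}^\times=\{(t,u):t\in\Z_k,\ u\in\{1,-1,s,-s\}\}$, a set of order $4k$. I expect this converse --- checking that a one-sided inverse is automatically two-sided --- to be the only step requiring genuine care, and it is precisely where multiplicativity of $S$ and $D$ is used.

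\emph{Step 2: generators.} I would single out three units: $z=(0,-1)$, $a=(1,1)$, and $b=(0,s)$. Direct computation with the multiplication rule and the values $S(\pm 1)=D(\pm 1)=\pm 1$, $S(s)=1$, $D(s)=-1$ shows that $z$ is central with $z^2=1$ (indeed $zw=wz=(-t,-u)$ for $w=(t,u)$), that $a^n=(n,1)$ so $a$ has order $k$, that $b^2=1$, and that $bab^{-1}=(0,s)(1,1)(0,s)=(-1,1)=a^{-1}$. Consequently there is a group homomorphism $\dih_{2k}\to\gk{k}^\times$ sending $r\mapsto a$ and $s\mapsto b$; since $a$ has order exactly $k$, its image $\langle a,b\rangle$ consists of the $2k$ distinct elements $a^i=(i,1)$ and $a^ib=(-i,s)$ for $0\le i<k$, so the homomorphism is injective and $\langle a,b\rangle\cong\dih_{2k}$.

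\emph{Step 3: assembling.} Every element of $\langle a,b\rangle$ has second coordinate $1$ or $s$, so $z\notin\langle a,b\rangle$ and $\langle z\rangle\cap\langle a,b\rangle=\{1\}$. Since $z$ is central of order $2$ and $|\langle z\rangle|\cdot|\langle a,b\rangle|=2\cdot 2k=4k=|\gk{k}^\times|$, it follows that $\gk{k}^\times=\langle z\rangle\times\langle a,b\rangle\cong\cyc_2\times\dih_{2k}$. Finally, $0\oplus\Z[\cyc_2]$ is a subring of $\gk{k}$ isomorphic to $\Z[\cyc_2]$, which has characteristic $0$, so $\ch\gk{k}=0$; and when $k$ is odd, Proposition \ref{dih-prop} (\ref{dih-prop-decomp}) identifies $\cyc_2\times\dih_{2k}$ with $\dih_{4k}$, so $\dih_{4k}$ is realizable in characteristic $0$.
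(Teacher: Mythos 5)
Your proof is correct and follows essentially the same route as the paper: exhibit the units $(0,-1)$, $(1,1)$, $(0,s)$, verify the dihedral relations to get a subgroup $\cyc_2\times\dih_{2k}$ of order $4k$, and observe that the second coordinate of a unit must lie in $\Z[\cyc_2]^\times$ so there are at most $4k$ units. Your Step 1 additionally proves the converse (every $(t,u)$ with $u\in\{\pm 1,\pm s\}$ is a unit), which the paper's counting argument does not need, but this is a minor elaboration rather than a different approach.
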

\begin{proof}
We will show that $(\gk{k})^\times$ is isomorphic to $\langle -1 \rangle \times H = \pm H$, where $H \leq (\gk{k})^\times$ is isomorphic to $\dih_{2k} = \langle r, s \rangle$. In $\gk{k}$, write $r = (1, 1)$, $s = (0, s)$, and $\pm 1 = (0, \pm 1)$.

The element $r$ satisfies $r^i = (i, 1)$ for any integer $i$, so $r$ is a unit of order $k$. Now, $$sr = (0, s)(1, 1) = (1, s) = (-1, 1)(0, s) = r^{-1}s.$$ The subgroup of $(\gk{k})^\times$ generated by $r$ and $s$ has $2k$ elements and satisfies the relations $s^2 = 1, r^k = 1$, and $sr = r^{-1}s$; it is therefore isomorphic to $\dih_{2k}$. This proves that $(\gk{k})^\times$ contains $\pm \dih_{2k}$ as a subgroup. If $(t, u)$ is a unit of $\gk{k}$, then $u$ must be a unit in $\Z[s]/(s^2 -1)$, so $(\gk{k})^\times$ has at most $4k$ elements. Hence $(\gk{k})^\times = \pm \dih_{2k}$, as desired.
\end{proof}

\begin{remark}
The construction of $\gk{k}$ is an example of a more general phenomenon. Let $A$ and $B$ be rings and let $\map{f, g}{A}{Z(B)}$ be ring homomorphisms, where $Z(B)$ denotes the center of $B$. We may define a ring $T = B \rtimes A$ as follows. As an additive abelian group, let $T = B \oplus A$. Define a multiplication operation on $T$ by  $$(b, a)(b', a') = (bf(a') + b'g(a), aa').$$ The identity for this operation is $1 = (0, 1)$. If $(b, a)$ is a unit of $T$, then $a$ is a unit of $A$ and $$(b, a)^{-1} = (-bg(a^{-1})f(a^{-1}), a^{-1}).$$ The group $A^\times$ is naturally a subgroup of $T^\times$ as $(0, A^\times)$, and the group $B' = (B, +)$ is naturally a normal subgroup of $T^\times$ as $(B, 1)$. Further, $T^\times =  B' \cdot A^\times$ and $B' \cap A^\times = \{ (0, 1)\}$. Hence, $T^\times \cong B' \rtimes A^\times$. The homomorphism $\map{\varphi}{A^\times}{\mathrm{Aut}(B')}$ that determines the semidirect product is given by $\varphi_a(b) = g(a)f(a^{-1})b$. In the case of $\gk{k}$ above, $\varphi_{-1}(t) = t$ and $\varphi_s(t) = -t$, so $(\gk{k})^\times \cong \Z_k \rtimes \langle s, -1 \rangle \cong \dih_{2k} \times \langle -1 \rangle \cong \cyc_2 \times \dih_{2k}$.
\end{remark}

It remains to prove that $\dih_{2n}$ is not realizable in characteristic zero when $n = 4k$. Suppose $R$ is a ring of characteristic zero with unit group $\dih_{8k}$. Since $r^{n/2} = r^{2k}$ is the unique central unit of order 2, we must have $-1 = r^{2k} = (r^k)^2$. Let $i = r^k$. In $R$, the following identities are satisfied:
\[ \begin{aligned}
i^2 &= -1\\
s^2 &= 1\\
si & = sr^k = r^{-k}s = -is.
\end{aligned}\]
Let $Q$ denote the split quaternion ring over the integers. In more detail, $Q$ is the four dimensional free $\Z$-module with basis $\{1, i, s, is\}$ and whose multiplication is determined by the three identities above. This variant of Hamilton's quaternions was first described by Cockle in \cite{cockle}. We now have a ring homomorphism $$\map{\varphi}{Q}{R}$$ sending $i$ and $s$ in $Q$ to $i$ and $s$ in $R$.

Every element of $Q$ has the form $u + vs$ where $u, v \in \Z[i]$. For $\alpha = u + vs \in Q$, let $$\widehat{\alpha} = \bar{u} - vs$$ and define $$\map{N}{Q}{\Z}$$ by $$N(\alpha) = \alpha \widehat{\alpha} = |u|^2 - |v|^2,$$ where $|\cdot|$ denotes the usual norm for the complex numbers. The next proposition summarizes the relevant properties of the norm $N$ and the conjugation operation $\alpha \mapsto \widehat{\alpha}$. For $\alpha = a + bi + cs + dis \in Q$, call the integer $a$ the real part of $\alpha$, denoted $\mathrm{Re}\,\alpha$.

\begin{prop} Let $Q$ be the split quaternion ring over $\Z$ with conjugation operation $\alpha \mapsto \widehat{\alpha}$ and norm $N$. Take $\alpha, \beta \in Q$. \label{N}
\begin{enumerate}
\item We have the identities\[\begin{aligned} \widehat{\alpha \beta} &= \widehat{\beta}\widehat{\alpha}, \\ \widehat{\alpha + \beta} &= \widehat{\alpha} + \widehat{\beta}, \text{and}\\ \widehat{\widehat{\alpha}} &= \alpha. \end{aligned}\] \label{hat-ids}
\item $N$ is multiplicative; i.e., $N(\alpha \beta) = N(\alpha)N(\beta)$.\label{multiplicative}
\item The element $\alpha$ is a unit if and only if $N(\alpha) = \pm 1$.\label{unit}
\item If $N(\alpha) = 1$ and $N(\alpha - 1) = 0$, then $\mathrm{Re}\, \alpha = 1$.\label{oddity}
\end{enumerate}
\end{prop}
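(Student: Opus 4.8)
The plan is to reduce all four statements to one structural fact about $Q$ together with short direct computations. Since $si = -is$ and $s$ commutes with $\Z \subseteq Q$, extending $\Z$-linearly shows that $su = \bar u s$ for every $u \in \Z[i]$, where $u \mapsto \bar u$ denotes complex conjugation. Writing a general element of $Q$ as $u + vs$ with $u, v \in \Z[i]$, I would record once and for all the resulting multiplication rule
\[
(u + vs)(w + zs) = (uw + v\bar z) + (uz + v\bar w)s ,
\]
and then reuse it everywhere. Specializing to $\widehat{u+vs} = \bar u - vs$ gives both $(u+vs)\widehat{(u+vs)} = |u|^2 - |v|^2 = N(u+vs)$ and $\widehat{(u+vs)}(u+vs) = |u|^2 - |v|^2$; in particular $N$ takes values in $\Z$, which sits in the center of $Q$, and $N(\alpha) = \alpha\widehat\alpha = \widehat\alpha\alpha$.

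Granting this, parts (1)--(3) are mechanical. Additivity of $\alpha \mapsto \widehat\alpha$ is clear, and $\widehat{\widehat\alpha} = \alpha$ holds because conjugating $u + vs$ twice returns $\overline{\bar u} - (-v)s = u + vs$. For the antihomomorphism identity $\widehat{\alpha\beta} = \widehat\beta\widehat\alpha$ I would expand both sides with the displayed rule and compare: with $\alpha = u + vs$ and $\beta = w + zs$, both equal $\bar u\bar w + \bar v z - (uz + v\bar w)s$, using commutativity of $\Z[i]$. Multiplicativity of $N$ then follows formally from (1): $N(\alpha\beta) = \alpha\beta\,\widehat{\alpha\beta} = \alpha\beta\,\widehat\beta\,\widehat\alpha = \alpha\,N(\beta)\,\widehat\alpha = N(\beta)\,\alpha\widehat\alpha = N(\alpha)N(\beta)$, where the central integer $N(\beta)$ is moved past $\alpha$. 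For (3): if $N(\alpha) = \varepsilon \in \{1, -1\}$ then $\varepsilon\widehat\alpha$ is a two-sided inverse of $\alpha$ since $\alpha(\varepsilon\widehat\alpha) = (\varepsilon\widehat\alpha)\alpha = \varepsilon N(\alpha) = 1$; conversely, if $\alpha$ is a unit then $N(\alpha)N(\alpha^{-1}) = N(1) = 1$ in $\Z$, forcing $N(\alpha) = \pm 1$.

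For (4), the key is the ``trace'' identity $\alpha + \widehat\alpha = u + \bar u = 2\,\mathrm{Re}\,\alpha$, which yields
\[
N(\alpha - 1) = (\alpha - 1)(\widehat\alpha - 1) = N(\alpha) - (\alpha + \widehat\alpha) + 1 = N(\alpha) - 2\,\mathrm{Re}\,\alpha + 1 .
\]
Substituting $N(\alpha) = 1$ and $N(\alpha - 1) = 0$ gives $0 = 2 - 2\,\mathrm{Re}\,\alpha$, so $\mathrm{Re}\,\alpha = 1$. I do not expect a genuine obstacle here: everything is a finite computation once $su = \bar u s$ is in hand. The one place to be careful is sign bookkeeping in the twisted product --- which is precisely why I would derive the displayed multiplication formula first and never re-derive it --- and in (4) one should notice the collapse of $N(\alpha - 1)$ into $N(\alpha)$ and the trace rather than grinding it out in coordinates, though the coordinate computation $N(\alpha) - N(\alpha - 1) = a^2 - (a-1)^2 = 2a - 1$ works just as well.
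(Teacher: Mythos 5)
Your proof is correct and, for part (4), is exactly the computation the paper gives: expand $N(\alpha-1)=(\alpha-1)(\widehat{\alpha}-1)=N(\alpha)-(\alpha+\widehat{\alpha})+1$ and use the trace identity. The paper simply cites parts (1)--(3) as standard, so your explicit derivations via the multiplication rule $(u+vs)(w+zs)=(uw+v\bar z)+(uz+v\bar w)s$ just fill in details the paper delegates to a reference; the checks are all accurate.
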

\begin{proof}
Statements (\ref{hat-ids}), (\ref{multiplicative}), and (\ref{unit}) are standard (see, for example, \cite{bial}). For (\ref{oddity}), we take $\alpha \in Q$ and compute:
\[\begin{aligned}
0 = N(\alpha - 1) &= (\alpha - 1)(\widehat{\alpha - 1})\\
&= (\alpha - 1)(\widehat{\alpha} - 1)\\
&= N(\alpha) - (\alpha + \widehat{\alpha}) + 1\\
&= 1 - 2\mathrm{Re}\, \alpha + 1\\
&= 2(1-\mathrm{Re}\, \alpha).
\end{aligned}\]
This forces $\mathrm{Re}\, \alpha = 1$, as desired.
\end{proof}

We now prove that $\dih_{8k}$ is not realizable in characteristic zero.

\begin{prop}
If $4$ divides $n$ then the dihedral group $\dih_{2n}$ is not realizable in characteristic $0$. \label{char0-8k}
\end{prop}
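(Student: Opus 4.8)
The plan is to exploit the fact that the split quaternion ring $Q$, unlike Hamilton's quaternions, has an \emph{infinite} unit group --- a consequence of the norm $N$ being an indefinite form --- and to convert the existence of the ring homomorphism $\map{\varphi}{Q}{R}$ (constructed just before the statement) into additive torsion in $R$, contradicting $\ch R = 0$. So suppose, for contradiction, that $R$ is a ring with $\ch R = 0$ and $R^\times \cong \dih_{2n}$ where $4 \mid n$; write $n = 4k$, so $R^\times \cong \dih_{8k}$ and, as explained above, $-1 = (r^k)^2$ and $\varphi$ is determined by $\varphi(i) = r^k$ and $\varphi(s) = s$.

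The key step is to exhibit an explicit one-parameter family of units of $Q$. For $c \in \Z$, set $\alpha_c = (c + i) + cs \in Q$ (that is, $u = c+i$, $v = c$ in the notation $\alpha = u + vs$). Then $N(\alpha_c) = |c+i|^2 - |c|^2 = (c^2+1) - c^2 = 1$, so $\alpha_c \in Q^\times$ by Proposition \ref{N}\,(\ref{unit}), with $\alpha_c^{-1} = \widehat{\alpha_c} = (c - i) - cs$. Since a ring homomorphism carries units to units, applying $\varphi$ gives, for every $c \in \Z$,
\[
\varphi(\alpha_c) = r^k + c(1 + s) \in \dih_{8k}, \qquad \varphi(\alpha_c^{-1}) = -r^k + c(1 - s) \in \dih_{8k}.
\]
As $\dih_{8k}$ is a finite set while $c$ ranges over $\Z$, the pigeonhole principle produces integers $c \neq c'$ with $\varphi(\alpha_c) = \varphi(\alpha_{c'})$ and integers $d \neq d'$ with $\varphi(\alpha_d^{-1}) = \varphi(\alpha_{d'}^{-1})$. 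Subtracting yields $(c-c')(1+s) = 0$ and $(d-d')(1-s) = 0$ in $R$, so both $1 + s$ and $1 - s$ are torsion elements of the additive group of $R$.

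To conclude, pick a common positive multiple $M$ of $|c-c'|$ and $|d-d'|$. Then $M(1+s) = 0 = M(1-s)$, hence $2M\cdot 1_R = M\big((1+s) + (1-s)\big) = 0$; since $2M$ is a nonzero integer, this contradicts $\ch R = 0$. Therefore no ring of characteristic $0$ has unit group $\dih_{8k}$, i.e.\ $\dih_{2n}$ is not realizable in characteristic $0$ whenever $4 \mid n$.

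The one genuinely non-formal ingredient is the choice of the units $\alpha_c$; everything else is bookkeeping with $\varphi$ and pigeonhole. The point it makes concrete is that $Q\otimes_\Z\Q$ is the \emph{split} algebra $M_2(\Q)$, so the order $Q$ has infinitely many units --- and indeed an alternative route is to observe that $\ker\varphi$ is a two-sided ideal of $Q$, that $M_2(\Q)$ is simple and $Q/\ker\varphi$ is infinite (it has characteristic $0$), whence $\varphi$ is injective, giving $|Q^\times| = \infty > |\dih_{8k}|$ directly. I expect the main obstacle in writing the argument cleanly to be precisely this: identifying a sufficiently transparent family of units (or, in the alternative, handling the ideal structure of $Q$), together with making sure the passage from a coincidence $\varphi(\alpha_c)=\varphi(\alpha_{c'})$ to genuine additive torsion in $R$ is airtight when $R$ is allowed to have additive torsion of its own.
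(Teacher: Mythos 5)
Your proof is correct, and it takes a genuinely different route from the paper's, even though both arguments live inside the same framework (the homomorphism $\map{\varphi}{Q}{R}$ from the split quaternions, set up in the text preceding the proposition). The paper exhibits a \emph{single} unit $z = (1+4i)+(3+3i)s$ with $N(z) = -1$, and shows that the integer $N(z^{8k}-1)$ is nonzero — via the recurrence $z^j = 2z^{j-1}+z^{j-2}$, an induction on $\mathrm{Re}\,z^j$, and the auxiliary fact that $N(\alpha)=1$ and $N(\alpha-1)=0$ force $\mathrm{Re}\,\alpha = 1$ — so that $\varphi(N(z^{8k}-1))=0$ contradicts $\ch R = 0$. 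You instead exhibit an \emph{infinite family} of norm-one units $\alpha_c$ and let the pigeonhole principle do the work: collisions $\varphi(\alpha_c)=\varphi(\alpha_{c'})$ and $\varphi(\alpha_d^{-1})=\varphi(\alpha_{d'}^{-1})$ make both $1+s$ and $1-s$ additively torsion, and summing kills $2M\cdot 1_R$. You correctly anticipated and handled the one real pitfall — that torsion of a single element like $1+s$ proves nothing about $\ch R$ — by arranging for two torsion elements whose sum is $2\cdot 1_R$. Your approach is more elementary (it needs only multiplicativity of $N$, not part (\ref{oddity}) of Proposition \ref{N} or any induction) and makes the underlying mechanism transparent: $Q$ is an order in the split algebra $\mathbf{M}_2(\mathbf{Q})$ and so has infinitely many units, which cannot all land injectively in a finite unit group without forcing torsion on $1_R$. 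The paper's computation buys a concrete nonzero integer in $\ker\varphi$; yours buys brevity and a statement that visibly generalizes. Your sketched alternative (injectivity of $\varphi$ via simplicity of $\mathbf{M}_2(\mathbf{Q})$, hence $Q^\times$ embeds in $R^\times$) is also sound, though as written it is only an outline.
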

\begin{proof}
Assume to the contrary that there is a ring $R$ of characteristic zero with $R^\times \cong \dih_{8k}$. As shown above, there is a ring homomorphism $\map{\varphi}{Q}{R}$, where $Q$ is the split quaternion ring over the integers. Consider the element $$z = (1 + 4i) + (3 + 3i)s \in Q.$$ Since $N(z) = 17 - 18 = -1$, $z$ is a unit in $Q$ and therefore maps to a unit in $R$. We then have $$0 = \varphi(z)^{8k} - 1 = \varphi(z^{8k} - 1)$$ since $|R^\times| = 8k$. Thus, $$0 = \varphi((z^{8k} - 1)(\widehat{z^{8k} - 1})) = \varphi(N(z^{8k} - 1)).$$ If the integer $N(z^{8k} - 1)$ is nonzero, then we will obtain a contradiction since $\mathrm{char}\, R = 0$. By direct computation, $z^2 = 2z + 1$ and so $z^{j} = 2z^{j-1} + z^{j-2}$ for all $j \geq 2$. Consequently, $$\mathrm{Re}\, z^j = 2\mathrm{Re}\, z^{j-1} + \mathrm{Re}\, z^{j-2}.$$ By induction, $\mathrm{Re}\, z^j > 1$ for $j \geq 2$. In particular, we have $\mathrm{Re}\, (z^{8k}) \neq 1$ and $N(z^{8k}) = (-1)^{8k} = 1$ so $N(z^{8k} - 1) \neq 0$ by Proposition \ref{N} (\ref{oddity}).
\end{proof}


\end{document}